\newtheorem{theorem}{Theorem}[section]
\newtheorem{lemma}[theorem]{Lemma}
\newtheorem{corollary}[theorem]{Corollary}
\newtheorem{remark}[theorem]{Remark}
\def\lam{\lambda}
\def\Soc{{\rm Soc}}
\def\G1{G^\mathcal{C}}
\def\D{\mathcal{D}}
\DeclareMathOperator{\Aut}{Aut} 
 \DeclareMathOperator{\Out}{Out}
\begin{document}
	\title{Block-transitive $t$-($k^2,k,\lambda$) designs with $PSL(n,q)$ as socle}
	\author{Guoqiang Xiong$^1$,Haiyan Guan$^{1,2}$\footnote{This work is supported by the National Natural Science Foundation
	of China (Grant No.12271173 ).} \\
		{\small\it  1. College of Mathematics and Physics, China Three Gorges University, }\\
		{\small\it  Yichang, Hubei, 443002, P. R. China}\\
		{\small\it 2. Three Gorges Mathematical Research Center, China Three Gorges University,}\\
		{\small\it  Yichang, Hubei, 443002, P. R. China}\\
		\date{}
	}
	
	\maketitle
	\date

\begin{abstract}
Let $\mathcal{D}=(\mathcal{P},\mathcal{B})$ be a non-trivial block-transitive $t$-$(k^2,k,\lambda)$ design with $G\leq \Aut(\mathcal{D})$ and $X\unlhd G\leq \Aut(X)$, where $X=PSL(n,q)(n\geq3).$ We prove that $t=2$ and the parameters $(n,q,v,k)$ is $(3,3,144,12),(4,7,400,20)$ or $(5,3,121,11).$ Moreover, $\mathcal{D}$ is a $2$-$(144,12,\lambda)$ design with $\lambda\in\{3,6,12\}$ if $\lambda\mid k$. 
	
	\smallskip\noindent
	{\bf Keywords}: $t$-design,Automorphism group,Block-transitivity,Projective special linear groups
	
		\smallskip\noindent
	{\bf Mathematics Subject Classification (2010)}: 05B05, 05B25, 20B25
\end{abstract}
	\section{Introduction}
	 A $t$-$(v,k,\lambda)$ design $\mathcal{D}$ is a pair $(\mathcal{P},\mathcal{B})$, where $\mathcal{P}$ is a set of  $v$ points and $\mathcal{B}$ is a collection of $k$-subsets of $\mathcal{P}$(called blocks), such that any  $t$ distinct points are contained in exactly $\lambda$ blocks. We say $\mathcal{D}$ is non-trivial if $t<k<v$.
	 All $t$-$(v,k,\lambda)$ designs in this paper are assumed to be non-trivial. An automorphism of $\mathcal{D}$ is a permutation of $\mathcal{P}$ that preserves $\mathcal{B}$. The full automorphism group of $\mathcal{D}$ comprises all such automorphisms, forming a group under the operation of permutation composition, denoted by $\Aut(\mathcal{D})$  and any subgroups of  $\Aut(\mathcal{D})$ is called an automorphism group of $\mathcal{D}$. 
	 A flag of $\mathcal{D}$ is a pair ($\alpha$,$B$) that $B$ contains $\alpha$ with $\alpha\in \mathcal{P}$, $B\in\mathcal{B}.$
	 A group $G$ is described as point-primitive (point-transitive, block-transitive, or flag-transitive, respectively) according to whether it acts primitively on points (transitively on points, transitively on blocks, or transitively on flags, respectively). By Block's result in \cite[Corollary 2.2]{block1967orbits}, any block-transitive group $G$ is necessarily point-transitive. If $X\unlhd G\leq \Aut(X)$ for some nonabelian simple group $X$, then $G$ is said to be almost simple with socle $X$.
	

	  The research on $t$-$(k^2,k,\lambda)$ designs has been started by Montinaro and Francot in their work(\cite{MR4561672,MR4516389,MR4481043}). More specifically, they demonstrated that for a  2-$(k^2,k,\lambda)$ design  with $\lambda$ $\mid$ $k$, any flag-transitive automorphism group $G$ is either an affine group or an almost simple group. The almost simple type was classified in \cite{MR4516389,MR4481043}, and the  affine type in \cite{MR4561672}. On the other hand, compared to flag-transitivity, block-transitivity is a weaker condition. The characterization of $t$-$(k^2,k,\lambda)$ designs admitting block-transitive automorphism groups presents greater challenges compared to the flag-transitive case. 
	  Guan and Zhou(\cite{MR4830077}) established that if $G$ is a block-transitive automorphism group of a $t$-$(k^2,k,\lambda)$ design, then $G$ must act primitively on the points $\mathcal{P}$. In addition, $G$ is either an affine type or an almost simple type. Such designs admitting an almost simple automorphism group with socle $X$ being a finite simple exceptional group of Lie type, a sporadic simple group or an alternating group have been  studied in \cite{Chen,MR4830077}. Alternatively, Xiong and Guan(\cite{Xiong}) have established the classification for the specific case where $X = PSL(2, q)$. Here, we continue this work, and investigate block-transitive $t$-$(k^2,k,\lambda)$ designs where	$X=PSL(n,q)$ with $n\geq3$. It is well-known that if $G$ is a transitive permutation group acting on $\mathcal{P}$, then for any non-empty subset $B \subseteq \mathcal{P}$, the pair $(\mathcal{P}, B^G)$ constructs a block-transitive $1$-design. Thus, we always assume that $t \geq 2$ in our work. The following is our main result:
	 \begin{theorem}\label{th1.2}
	 	\textup{Let $\mathcal{D} = (\mathcal{P},\mathcal{B})$ be a non-trivial $t$-$(k^2,k,\lambda)$ design admitting a block-transitive automorphism $G$ and $X\unlhd G\leq \Aut(X)$ with $X = PSL(n,q)$ and $n\geq3$. Suppose that  $\alpha\in\mathcal{P}$, $G_\alpha$ is  the point-stabilizer of $G$. Then $t=2$ and one of the  following holds:}
	 		\begin{enumerate}
	 		\item[{\rm(1)}]\textup{$X\cap G_\alpha\cong \mkern-1mu \raisebox{0.5ex}{$\hat{\ }$} \mkern-3mu (q^2+q+1):3$, $(n,q,v,k)=(3,3,144,12)$;}
	 		\item[{\rm(2)}]\textup{$X\cap G_\alpha\cong \mkern-1mu \raisebox{0.5ex}{$\hat{\ }$} \mkern-3mu[q^3]:GL(3,q)$, $(n,q,v,k)=(4,7,400,20)$;}
	 		\item[{\rm(3)}]\textup{$X\cap G_\alpha\cong \mkern-1mu \raisebox{0.5ex}{$\hat{\ }$} \mkern-3mu[q^4]:GL(4,q)$, $(n,q,v,k)=(5,3,121,11)$.}
	 			\end{enumerate}
	 \end{theorem}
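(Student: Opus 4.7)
My plan is to combine the basic counting identities for a $t$-$(k^2,k,\lambda)$ design with Aschbacher's classification of the maximal subgroups of $\PSL(n,q)$, enumerate the admissible point stabilisers, and then verify the design arithmetic for each surviving candidate. Since $G$ is block-transitive, Block's lemma gives point-transitivity, and by \cite{MR4830077} the action on $\mathcal{P}$ is in fact point-primitive, so $M := X\cap G_\alpha$ lies in a maximal subgroup of $X$ (and one may essentially reduce to $M$ itself being maximal). The hypothesis $v=k^2$ forces $[G:G_\alpha]$ to be a perfect square, while the standard identity $b\binom{k}{t}=\lambda\binom{v}{t}$, together with $v=|G|/|G_\alpha|$ and $b=|G|/|G_B|$, yields for $t=2$ the relations $r=\lambda(k+1)$ and $b=\lambda k(k+1)$, and analogous polynomial identities for $t\ge 3$. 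Rearranging $|G_\alpha|k^2=|G_B|b$ gives the key divisibility $k\mid\lambda(k+1)|G_B|$, hence $k\mid\lambda|G_B|$ because $\gcd(k,k+1)=1$.

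Next I would run through Aschbacher's eight geometric classes $\mathcal{C}_1,\ldots,\mathcal{C}_8$ together with the almost simple class $\mathcal{S}$, using the index formulas from Kleidman--Liebeck. For each candidate $M$, the requirement that $[X:M]$ (possibly adjusted by a small outer-automorphism factor) equal $k^2$ for some integer $k$ is already a strong filter: the Gaussian-binomial indices $\binom{n}{m}_q$ arising from parabolic subgroups ($\mathcal{C}_1$) are seldom perfect squares, and the indices produced by field-extension and Singer-type subgroups ($\mathcal{C}_3$) are even more restrictive. I would combine this squareness test with the divisibility $k\mid\lambda|G_B|$ and $|G_B|\mid|G_\alpha|$ (together with the trivial $\lambda<r$), so that in each surviving exceptional configuration $k$, $\lambda$ and the conjugacy class of $M$ are essentially determined. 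After this bookkeeping only the three stabiliser structures listed in (1)--(3) should remain, giving the parameter triples $(3,3,144,12)$, $(4,7,400,20)$ and $(5,3,121,11)$.

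Once $t=2$ is settled, I would rule out $t\geq 3$ by observing that the identity $b\binom{k}{t}=\lambda\binom{k^2}{t}$ combined with $|G_B|\mid|G_\alpha|$ imposes considerably stronger divisibility than the surviving small indices can meet, with Ray--Chaudhuri--Wilson-type bounds as a supplementary check. The main obstacle I anticipate is the $\mathcal{C}_1$ analysis: the Gaussian binomials $\binom{n}{m}_q$ form an infinite family, and cutting out exactly when $\binom{n}{m}_q=k^2$ is compatible with all of the arithmetic constraints above is a delicate Diophantine task that must isolate only $(n,q,m)\in\{(4,7,1),(5,3,1)\}$. A secondary difficulty is keeping careful track of the outer-automorphism contributions (diagonal, field and graph) whenever $G$ strictly contains $X$, since these enter both $|G_\alpha|$ and $|G_B|$ and, if handled non-uniformly, risk missing the genuine designs or admitting spurious ones; in particular the additional claim that for case (1) $\lambda\in\{3,6,12\}$ whenever $\lambda\mid k$ will require a finer count of possible orbits of $G_B$ on $\mathcal{P}$ once $|G_B|$ is pinned down from the arithmetic above.
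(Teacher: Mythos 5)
Your outline follows the same skeleton as the paper (block-transitivity $\Rightarrow$ point-primitivity, so $G_\alpha$ is maximal; Aschbacher/Kleidman--Liebeck case analysis; arithmetic filtering; then settle $t=2$), but the arithmetic engine you propose is too weak to make the infinite case analysis terminate, so there is a genuine gap. Your key divisibility $k\mid\lambda|G_B|$ is essentially vacuous here: the theorem does not assume $\lambda\mid k$ (that is only Theorem \ref{th13}), so $\lambda$ is unbounded, and $|G_B|$ is not known in advance; moreover your auxiliary claim $|G_B|\mid|G_\alpha|$ is unjustified (a block stabilizer need not embed in a point stabilizer -- Fisher's inequality only gives $|G_B|\le|G_\alpha|$). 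What actually drives the paper's proof is the subdegree condition of \cite{MR4830077}: for a block-transitive design with $v=k^2$, $k+1$ divides \emph{every} nontrivial subdegree (Lemma \ref{GuanL1}), whence $k+1\mid(v-1,|\Out(X)||X_\alpha|)$ (Corollary \ref{GuanC1}) and, when $H$ is not parabolic, $|X|<|\Out(X)|^2|H_0||H_0|_{p'}^2$ (Lemma \ref{GuanC2}). Plugging in explicit subdegrees from Saxl \cite{MR1940339} and comparing with $v=k^2$ is what bounds $n$ and $q$ in each Aschbacher class; your plan contains no comparable tool, and you yourself flag the $\mathcal{C}_1$ Diophantine analysis as an unresolved ``main obstacle.''

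Concretely, in the $P_1$ case one must solve $(q^n-1)/(q-1)=k^2$; the paper does this not by an ad hoc squareness filter but by invoking the Nagell--Ljunggren-type result recorded in \cite[A8.1]{MR1259738}, which yields exactly $(n,q)=(4,7)$ and $(5,3)$ -- your proposal gives no route to this conclusion. For $P_i$ with $i\ge2$ the paper never decides when a Gaussian binomial is a perfect square: instead it uses the subdegree $q(q^i-1)(q^{n-i}-1)/(q-1)^2$ and $k+1\mid d$ to force $i\le4$ and then eliminates the remaining cases, and analogous subdegree or order-bound arguments dispose of $\mathcal{C}_1'$, $\mathcal{C}_2$--$\mathcal{C}_8$ and $\mathcal{S}$. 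Finally, $t\ge3$ is excluded at the three surviving parameter sets simply from $b\mid|G|$ together with $b=\lambda_3\,v(v-1)(v-2)/\bigl(k(k-1)(k-2)\bigr)$; your appeal to Ray--Chaudhuri--Wilson bounds is neither needed nor worked out. As written, the proposal is a reasonable road map, but with the central quantitative lemmas missing it does not yet constitute a proof of the theorem.
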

\begin{remark}
{\rm Throughout this article, we sometimes precede the structure of a subgroup of a projective group with the symbol $\mkern-1mu \raisebox{0.5ex}{$\hat{\ }$} \mkern-3mu$ , which indicates that we are referring to the structure of its pre-image in the corresponding linear group. }
\end{remark} 
From Theorem \ref{th1.2} and the work of Xiong and Guan \cite[Theorem 1.1]{Xiong}, we directly derive Corollary \ref{13}.
\begin{corollary}\label{13}
		 	\textup{Let $\mathcal{D} = (\mathcal{P},\mathcal{B})$ be a non-trivial $t$-$(k^2,k,\lambda)$ design admitting a block-transitive automorphism $G$ and $X\unlhd G\leq \Aut(X)$ with $X = PSL(n,q)$ and $n\geq2$. Suppose that  $\alpha\in\mathcal{P}$, $G_\alpha$ is  the point-stabilizer of $G$. Then $t=2$ and one of the  following holds:}
		 	 	\begin{enumerate}
		 	 	\item[{\rm(1)}]\textup{$X\cap G_\alpha\cong D_{14}$, $(n,q,v,k)=(2,8,36,6)$;}
		 		\item[{\rm(2)}]\textup{$X\cap G_\alpha\cong \mkern-1mu \raisebox{0.5ex}{$\hat{\ }$} \mkern-3mu (q^2+q+1):3$, $(n,q,v,k)=(3,3,144,12)$;}
		 		\item[{\rm(3)}]\textup{$X\cap G_\alpha\cong\mkern-1mu \raisebox{0.5ex}{$\hat{\ }$} \mkern-3mu[q^3]:GL(3,q)$, $(n,q,v,k)=(4,7,400,20)$;}
		 		\item[{\rm(4)}]\textup{$X\cap G_\alpha\cong \mkern-1mu \raisebox{0.5ex}{$\hat{\ }$} \mkern-3mu[q^4]:GL(4,q)$, $(n,q,v,k)=(5,3,121,11)$.}
		 	\end{enumerate}
		 	
\end{corollary}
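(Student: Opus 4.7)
The plan is to derive Corollary \ref{13} as an immediate consequence of two already-established classifications, so that the work reduces to a disjoint case split on $n$. Since the hypothesis is $n\geq 2$, I would partition the argument into the two subcases $n=2$ and $n\geq 3$. In the subcase $n=2$, the socle is $X=PSL(2,q)$, and I would invoke \cite[Theorem 1.1]{Xiong} directly; that result forces $t=2$ and outputs the single admissible tuple $(n,q,v,k)=(2,8,36,6)$ with $X\cap G_\alpha\cong D_{14}$, which is exactly item (1) of the corollary. In the subcase $n\geq 3$, the hypotheses of Theorem \ref{th1.2} of the present paper are met verbatim; applying it produces the three remaining tuples $(3,3,144,12)$, $(4,7,400,20)$, $(5,3,121,11)$ together with the stated point-stabilizer structures, and again yields $t=2$. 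These furnish items (2)--(4).

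Since $\{2\}\cup\{n:n\geq 3\}=\{n:n\geq 2\}$ is a disjoint partition, the union of the two conclusions is precisely the complete list in items (1)--(4), and the uniform conclusion $t=2$ follows because it holds in both cited theorems. The only bookkeeping step I would carry out is to verify that the hypotheses of Corollary \ref{13} (non-trivial block-transitive $t$-$(k^2,k,\lambda)$ design with almost simple automorphism group having socle $PSL(n,q)$) are stated identically to the hypotheses required by \cite[Theorem 1.1]{Xiong} and by Theorem \ref{th1.2}, which is clear by inspection.

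There is essentially no obstacle here: all of the difficult classification work (reduction to maximal subgroups of $PSL(n,q)$, the arithmetic constraints coming from the block-transitive action on a $k^2$-point set, the separate treatment of the Aschbacher classes, and the case-by-case elimination) has already been absorbed into the two cited theorems. The corollary is therefore a pure packaging statement, and I would present it as a one-line deduction after stating the two-way case split on $n$.
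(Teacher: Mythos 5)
Your proposal is correct and matches the paper's own derivation: the corollary is obtained by splitting on $n=2$ (handled by \cite[Theorem 1.1]{Xiong}) versus $n\geq 3$ (handled by Theorem \ref{th1.2}) and combining the two lists. Nothing further is needed.
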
 
We now undertake a more detailed examination of the cases described in Theorem \ref{th1.2}. By introducing the constraint that $\lambda\mid k$, we derive the following result.
\begin{theorem}\rm\label{th13}
Let $\mathcal{D} = (\mathcal{P},\mathcal{B})$ be a non-trivial $t$-$(k^2,k,\lambda)$ design with $\lambda\mid k$, admitting a block-transitive automorphism $G$ and $X\unlhd G\leq \Aut(X)$, where $X = PSL(n,q).$ Then $X=PSL(3,3)$ and $\lambda\in\{3,6,12\}$.
\end{theorem}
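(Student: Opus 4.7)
The plan is to invoke Theorem~\ref{th1.2} to reduce to one of the three parameter triples $(n,q,v,k)$ and then exploit $\lambda\mid k$ together with the block-transitive identities $b=\lambda k(k+1)$, $r=\lambda(k+1)$ and $|G_B|=|G|/b$. The hypothesis $\lambda\mid k$ leaves $\lambda\in\{1,2,4,5,10,20\}$ when $k=20$, $\lambda\in\{1,11\}$ when $k=11$, and $\lambda\in\{1,2,3,4,6,12\}$ when $k=12$; the goal is to eliminate the first two parameter triples entirely and to narrow the third to $\lambda\in\{3,6,12\}$.

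For $(v,k)=(121,11)$ with $X=\PSL(5,3)$ of order $2^{9}\cdot 3^{10}\cdot 5\cdot 11^{2}\cdot 13$ and $|\Out(X)|=2$, the only maximal subgroups of $X$ (or of $X.2$) of index at most $132$ are the two classes of parabolics of index $121$ stabilising a point or a hyperplane of $\PG(4,3)$. A block stabiliser of index $b\in\{132,1452\}$ must therefore either itself be a maximal subgroup (of index $132$ or $1452$) or be properly contained in one of the two parabolics. A short subgroup-order check, using the maximal subgroup list for $\PSL(5,3).2$, rules out both possibilities, eliminating this parameter triple.

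For $(v,k)=(400,20)$ with $X=\PSL(4,7)$ of order $2^{9}\cdot 3^{4}\cdot 5^{2}\cdot 7^{6}\cdot 19$, every candidate $\lambda$ satisfies $b=420\lambda\mid|G|$, so divisibility alone is insufficient. Instead one studies the action of $G_\alpha\cong\mkern-1mu \raisebox{0.5ex}{$\hat{\ }$} \mkern-3mu [7^{3}]{:}\GL(3,7)$ on the $r=21\lambda$ blocks through $\alpha$, combined with the subdegrees of the natural action of $\PSL(4,7)$ on the $400$ points of $\PG(3,7)$. Standard double-counting of pairs $(\alpha,\beta)$ with $\alpha,\beta\in B$ and $\beta$ in a fixed suborbit of $G_\alpha$ then yields a numerical contradiction for every surviving $\lambda$.

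For $(v,k)=(144,12)$ with $X=\PSL(3,3)$ of order $5616=2^{4}\cdot 3^{3}\cdot 13$, the six candidate block-stabiliser orders $\{36,18,12,9,6,3\}$ (or twice these for $G=X.2$) all satisfy $b\mid|G|$, so arithmetic alone cannot separate the six values of $\lambda$. The remaining and principal task is to rule out $\lambda\in\{1,2,4\}$: one enumerates conjugacy classes of subgroups of $\PSL(3,3).2$ of each admissible order, computes their orbits on the $144$-point coset space $X/(13{:}3)$, and tests the $2$-balance condition $b\binom{k}{2}=\binom{v}{2}\lambda$ on unions of such orbits of total size $12$. This subgroup-level analysis is the main obstacle; once completed, only $\lambda\in\{3,6,12\}$ survive, as claimed.
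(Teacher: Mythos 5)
Your overall architecture (reduce to the three triples of Theorem \ref{th1.2}, eliminate $(121,11)$ and $(400,20)$ by group-theoretic means, then settle the $(144,12)$ case for $G=\PSL(3,3)$ or $\PGL(3,3)$ by an explicit block-stabilizer/orbit search) is the same as the paper's, and your treatment of $\PSL(5,3)$ — no subgroup of the required index $132$ or $1452$, read off from the maximal subgroup structure — is essentially the paper's argument (the paper phrases it as the non-existence of a transitive permutation representation of degree $12\cdot 11^2$, citing the Bray--Holt--Roney-Dougal tables). The $\PSL(3,3)$ step is, in the paper too, a finite MAGMA/GAP computation over subgroups of the admissible orders and unions of their orbits of total size $12$, so deferring that step to the computer matches the paper, although as written you have not actually carried it out and so have not yet ruled out $\lambda\in\{1,2,4\}$.

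The genuine gap is your elimination of $(n,q,v,k)=(4,7,400,20)$. Here $H_0\cong[7^3]{:}\GL(3,7)$ is a maximal parabolic, so the $400$ points are the points (or hyperplanes) of $\PG(3,7)$, and every $G$ with $\PSL(4,7)\unlhd G\leq\PGL(4,7)$ acts $2$-transitively on them. Hence there is exactly one nontrivial suborbit, of length $399$, and your proposed double count of pairs $(\alpha,\beta)$ with $\beta$ in a fixed suborbit of $G_\alpha$ collapses to the standard identity $r(k-1)=\lambda(v-1)$, i.e. $21\lambda\cdot 19=399\lambda$, which holds identically and excludes no value of $\lambda$; likewise Lemma \ref{GuanL1} only yields $21\mid 399$, which is true. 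So no ``numerical contradiction for every surviving $\lambda$'' can emerge from this counting. What is actually needed (and what the paper does) is to show that $G$ has no subgroup of index $b=420\lambda$ for the admissible $\lambda\geq 2$, equivalently no transitive permutation representation of degree $420\lambda$, which follows from the maximal subgroup tables of $\PSL(4,7)$ and $\PGL(4,7)$; the case $\lambda=1$ is handled in the paper by the observation that block-transitivity with $\lambda=1$ forces flag-transitivity, and no flag-transitive $2$-$(k^2,k,1)$ design exists. Until the suborbit count is replaced by an argument of this kind, the $(400,20)$ case is not eliminated and the theorem is not proved.
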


\section{Preliminaries}

In this section, the notation and definitions in both design theory and group theory are standard and can be found in \cite{MR0827219,MR1409812,MR0812053}. Now we state some valuable findings from both design theory and group theory.

\begin{lemma}\label{21}\textup{ Let   $\mathcal{D}=(\mathcal{P},\mathcal{B})$ be a $t$-$(v ,k, \lam)$ design, then $\D$ is a $s$-$(v ,k, \lam_s)$ design for any $s$ with $1\leq s\leq t,$ and $$\lam_s=\lam\frac{\tbinom{v-s}{t-s}}{\tbinom{k-s}{t-s}}.$$}
\end{lemma}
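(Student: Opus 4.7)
The plan is to prove this by a standard double-counting argument, applied to each fixed $s$-subset of $\mathcal{P}$. The statement has two parts: (a) the number of blocks through any $s$-subset depends only on $s$ (not on which $s$-subset is chosen), and (b) this number equals the stated closed form. Both will fall out simultaneously from one counting identity.

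First I would fix an arbitrary $s$-subset $S\subseteq \mathcal{P}$ and let $\lambda_s(S)$ denote the number of blocks of $\mathcal{D}$ containing $S$. To pin down $\lambda_s(S)$, I would count the cardinality of
\[
\mathcal{F}(S)\;=\;\{(T,B) : S\subseteq T\subseteq B,\ |T|=t,\ B\in\mathcal{B}\}
\]
in two ways. Summing first over $T$: the number of $t$-subsets $T$ with $S\subseteq T\subseteq\mathcal{P}$ is $\binom{v-s}{t-s}$ (we choose the remaining $t-s$ points from the $v-s$ points outside $S$), and by the $t$-design hypothesis each such $T$ lies in exactly $\lambda$ blocks, so $|\mathcal{F}(S)|=\lambda\binom{v-s}{t-s}$. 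Summing first over $B$: each block containing $S$ contains exactly $\binom{k-s}{t-s}$ subsets $T$ of size $t$ that contain $S$, so $|\mathcal{F}(S)|=\lambda_s(S)\binom{k-s}{t-s}$.

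Equating the two expressions yields
\[
\lambda_s(S)\;=\;\lambda\,\frac{\binom{v-s}{t-s}}{\binom{k-s}{t-s}},
\]
and crucially the right-hand side is independent of the particular $s$-subset $S$. Hence every $s$-subset lies in the same number of blocks, which is exactly what it means for $\mathcal{D}$ to be an $s$-$(v,k,\lambda_s)$ design with the claimed value of $\lambda_s$. There is no genuine obstacle here; the only minor bookkeeping point is to verify that $\binom{k-s}{t-s}\neq 0$, which holds because $s\le t\le k$.
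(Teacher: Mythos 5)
Your double-counting of the pairs $(T,B)$ with $S\subseteq T\subseteq B$ is correct and is exactly the standard argument for this classical fact, which the paper itself states without proof (deferring to the standard design-theory references). Nothing is missing; the remark that $\binom{k-s}{t-s}\neq 0$ since $s\le t\le k$ covers the only bookkeeping point.
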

  The following notation is adopted throughout: $\lambda_0$ represents the total number of blocks, commonly referred to as $b$. The parameter $\lambda_1$ indicates the number of blocks containing a specific point, which is typically denoted by $\gamma$.

	 	\begin{lemma}\label{GuanL1}{\rm \cite[Corollary 2.1]{MR4830077}}  \textup{Let $\mathcal{D} = (\mathcal{P},\mathcal{B})$ be a $t$-$(v,k,\lambda)$ design with $G\leq \Aut(\mathcal{D})$,  and let $d$ be a non-trivial subdegree of $G$. If $G$ is block-transitive and $v=k^2$, then $k+1$ $\mid$ $d$.}
	 \end{lemma}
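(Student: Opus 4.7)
The plan is a short double-counting argument on ordered pairs of points, using the fact that block-transitivity forces such counts to be uniform across blocks. First I will record the numerical identities forced by $v = k^2$. Since $\mathcal{D}$ has $t \geq 2$, Lemma \ref{21} gives a constant $\lambda_2$, and the standard identities $\gamma(k-1) = \lambda_2(v-1)$ and $bk = v\gamma$ then yield
\[
\gamma = \lambda_2(k+1), \qquad b = k(k+1)\lambda_2.
\]

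Next, let $\Delta$ be a non-trivial $G_\alpha$-orbit of size $d$ and pick $\beta \in \Delta$. Since block-transitivity of $G$ implies point-transitivity (Block's theorem, cited in the introduction), the $G$-orbit $\Omega$ of the ordered pair $(\alpha,\beta)$ has size $vd = k^2 d$. I will count the incidences
\[
N := \#\{(P,B) : P = (\gamma,\delta) \in \Omega,\ \gamma,\delta \in B\}
\]
in two ways. From the pair side, each ordered pair of distinct points lies in exactly $\lambda_2$ blocks, so $N = k^2 d \lambda_2$. From the block side, the $G$-invariance of $\Omega$ together with block-transitivity of $G$ forces the quantity $T := \#\{(\gamma,\delta) \in \Omega : \gamma,\delta \in B\}$ to be independent of $B \in \mathcal{B}$, so $N = bT = k(k+1)\lambda_2 T$.

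Equating the two expressions gives $k(k+1)\lambda_2 T = k^2 d \lambda_2$, that is, $(k+1) T = k d$. Because $\gcd(k, k+1) = 1$, this forces $(k+1) \mid d$, which is the claim. The only non-routine point is the block-independence of $T$; this follows immediately from $\Omega$ being a $G$-orbit on ordered pairs and $G$ acting transitively on $\mathcal{B}$, so I expect no serious obstacle, and all remaining ingredients are the standard parameter identities for a $t$-design specialised to $v = k^2$.
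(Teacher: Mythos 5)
Your argument is correct: the identities $\gamma=\lambda_2(k+1)$, $b=k(k+1)\lambda_2$, the constancy of $T$ over blocks (from $G$-invariance of the orbital $\Omega$ plus block-transitivity), and the final step $(k+1)T=kd$ with $\gcd(k,k+1)=1$ are all sound. The paper itself gives no proof here — it cites \cite[Corollary 2.1]{MR4830077} — and your double count of pairs from a $G$-orbital against blocks is essentially the standard argument by which that cited result is established, so there is nothing to add.
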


	 \begin{lemma}\label{GuanL2}{\rm \cite[Lemma 4.1]{MR4830077}} \textup{Let $\mathcal{D}$ be a $t$-$(k^2,k,\lambda)$ design admitting a block-transitive automorphism group $G$.	If $\alpha$ is a point of $\mathcal{P}$, then $\frac{k+1}{(k+1,| \Out(X) |)}$ divides $|X_\alpha|.$ } 		
	 \end{lemma}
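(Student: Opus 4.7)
The plan is to extract the divisibility $(k+1)\mid |G_\alpha|$ from the parameter equations for a block-transitive $2$-design with $v=k^2$, and then descend from the almost simple group $G$ to its socle $X$ using the index formula for $|G_\alpha|/|X_\alpha|$.

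Since $t\ge 2$, Lemma~\ref{21} says that $\mathcal{D}$ is simultaneously a $2$-$(k^2,k,\lambda_2)$ design, so the total number of blocks equals
$$b=\frac{v(v-1)}{k(k-1)}\,\lambda_2=\frac{k^2(k^2-1)}{k(k-1)}\,\lambda_2=k(k+1)\,\lambda_2,$$
using $v=k^2$ and $v-1=(k-1)(k+1)$. By Block's theorem the block-transitive group $G$ is also point-transitive, so orbit--stabilizer gives $|G|=v\,|G_\alpha|=k^2\,|G_\alpha|$ and $|G|=b\,|G_B|$ for any block $B$. Equating these two expressions yields
$$k^2\,|G_\alpha|=k(k+1)\,\lambda_2\,|G_B|,\qquad\text{that is,}\qquad k\,|G_\alpha|=(k+1)\,\lambda_2\,|G_B|.$$
Since $\gcd(k,k+1)=1$, I can cancel the factor $k$ on the left to conclude $(k+1)\mid |G_\alpha|$.

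To pass from $G_\alpha$ down to $X_\alpha$, I invoke the standing almost-simple hypothesis $X\unlhd G\le \Aut(X)$. Then $G_\alpha\cap X=X_\alpha$ and
$$G_\alpha/X_\alpha \;\cong\; G_\alpha X/X \;\le\; G/X \;\le\; \Aut(X)/X=\Out(X),$$
so the index $m:=|G_\alpha|/|X_\alpha|$ divides $|\Out(X)|$. Writing $(k+1)\mid m\,|X_\alpha|$ and dividing out the common factor gives $(k+1)/\gcd(k+1,m)\mid |X_\alpha|$. Finally, because $m\mid |\Out(X)|$ we have $\gcd(k+1,m)\mid \gcd(k+1,|\Out(X)|)$, and both of these divide $k+1$; therefore
$$\frac{k+1}{\gcd(k+1,|\Out(X)|)} \;\Bigm|\; \frac{k+1}{\gcd(k+1,m)} \;\Bigm|\; |X_\alpha|,$$
which is precisely the claimed divisibility.

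The whole proof is elementary bookkeeping once one has the two ingredients (i) the $2$-design identity $b=k(k+1)\lambda_2$ and (ii) Block's point-transitivity theorem. There is no real obstacle; the only step that requires a little care is the last number-theoretic manipulation, where one must remember that for divisors $a\mid b$ of $k+1$ the quotient $(k+1)/b$ divides $(k+1)/a$, not the other way around.
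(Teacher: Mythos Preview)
Your argument is correct. The paper does not actually prove this lemma; it is quoted verbatim from \cite[Lemma~4.1]{MR4830077} and used as a black box, so there is no ``paper's own proof'' to compare against. What you have supplied is the standard derivation: obtain $(k+1)\mid|G_\alpha|$ from the parameter identity $b=k(k+1)\lambda_2$ together with Block's theorem, and then descend to $X_\alpha$ via the second isomorphism theorem and the divisibility $[G_\alpha:X_\alpha]\mid|\Out(X)|$. Every step is sound, including the final number-theoretic step where you use that $d_1\mid d_2\mid k+1$ implies $(k+1)/d_2\mid(k+1)/d_1$.
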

	 In this paper, $(x,y)$ denotes the greatest common divisor of positive integers $x$ and $y$. Combining Lemmas \ref{GuanL1} and \ref{GuanL2}, the following result is obvious.
	 \begin{corollary}\label{GuanC1}
	 	\textup{Let $\mathcal{D} = (\mathcal{P},\mathcal{B})$ be a $t$-$(k^2,k,\lambda)$ design and $G\leq  \Aut({\mathcal D})$ is block-transitive,   and $\alpha \in \mathcal{P}$, then $k+1 \mid (v-1,|\Out(X)||X_\alpha|)$.}
	 \end{corollary}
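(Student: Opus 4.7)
The corollary is essentially a two-line consequence of what precedes it, so the plan is just to spell out the two divisibilities and intersect them.

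First, I would use the hypothesis $v=k^2$ to produce the divisibility $k+1\mid v-1$. This is immediate from the factorization $v-1=k^2-1=(k-1)(k+1)$ and requires no group-theoretic input at all.

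Next, I would apply Lemma \ref{GuanL2} directly. That lemma says $\frac{k+1}{(k+1,|\Out(X)|)}$ divides $|X_\alpha|$. Clearing the denominator, this is the same as saying $k+1$ divides $(k+1,|\Out(X)|)\cdot|X_\alpha|$. Since $(k+1,|\Out(X)|)$ divides $|\Out(X)|$, it follows that $(k+1,|\Out(X)|)\cdot|X_\alpha|$ divides $|\Out(X)|\cdot|X_\alpha|$, and therefore $k+1\mid|\Out(X)|\cdot|X_\alpha|$.

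Finally, I would combine the two divisibilities: an integer dividing two quantities divides their greatest common divisor, so $k+1\mid(v-1,\,|\Out(X)|\cdot|X_\alpha|)$, which is exactly the claim. There is no real obstacle here; the only thing to be slightly careful about is the logical step $\frac{k+1}{(k+1,m)}\mid n \Longrightarrow k+1\mid mn$, which one should state explicitly rather than invoke as ``obvious''. Note that Lemma \ref{GuanL1} enters only indirectly, through its role in the proof of Lemma \ref{GuanL2}; it is not needed again at this stage.
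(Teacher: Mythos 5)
Your argument is correct and matches the paper's own (implicit) justification: the paper simply notes the corollary is obvious by combining Lemma \ref{GuanL1}/\ref{GuanL2}, i.e.\ $k+1\mid v-1$ from $v=k^2$ together with $k+1\mid|\Out(X)||X_\alpha|$ obtained by clearing the gcd in Lemma \ref{GuanL2}, exactly as you do. Your explicit handling of the step $\frac{k+1}{(k+1,m)}\mid n\Rightarrow k+1\mid mn$ is a welcome precision but not a departure from the paper's route.
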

	 	 For a given positive integer $n$ and a prime divisor $p$ of $n$, we denote the $p$-part of $n$ by $n_p$, that is to say, $n_p=p^u$ with $p^u\mid n$ but $p^{u+1}\nmid n$ where $u$ is a positive integer.
	 \begin{lemma}\label{GuanC2}{\rm \cite[Corollary 2.11]{Chen}}
	 \textup{Suppose that $\mathcal{D}=(\mathcal{P},\mathcal{B})$ be a block-transitive $t$-$(v,k,\lambda)$ design with $v=k^2$, $G\leq$ Aut($\mathcal{D}$), $\alpha\in\mathcal{P}$, and $X=\Soc(G)$ be a simple group of Lie type in characteristic $p$. If the point stabilizer $G_\alpha$ is not a parabolic subgroup of $G$, then $(p,v-1)=1,$ and $|G|<|G_\alpha||G_\alpha|_{p^{\prime}}^2$.  In particular, 	
	  \begin{equation}
	 		\label{o}
	 		|X|<|\Out(X)|^2|X\cap G_\alpha||X\cap G_\alpha|_{p^{\prime}}^2.
	 \end{equation}}
	 \end{lemma}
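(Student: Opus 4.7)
The plan is to invoke Theorem \ref{th1.2}, which forces $t=2$ and restricts $(n,q,v,k)$ to the three triples
\begin{equation*}
(3,3,144,12),\quad (4,7,400,20),\quad (5,3,121,11),
\end{equation*}
and then to eliminate cases (2) and (3) outright and to refine case (1) by exploiting the added hypothesis $\lambda\mid k$. Throughout I would use the standard 2-design identities
\begin{equation*}
b=\lambda k(k+1),\quad r=\lambda(k+1),\quad |G_B|=|G|/b,
\end{equation*}
together with the fact that $G$ is point-primitive (Block's theorem), so $X\cap G_\alpha$ is the maximal subgroup of $X$ already pinned down by Theorem \ref{th1.2}.

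For $(n,q,v,k)=(4,7,400,20)$ the hypothesis $\lambda\mid 20$ gives $\lambda\in\{1,2,4,5,10,20\}$, and for $(n,q,v,k)=(5,3,121,11)$ it gives $\lambda\in\{1,11\}$. In each of these two cases I would compute $|X|$ and $|X\cap G_\alpha|$ explicitly, determine the candidate order $|G_B|=|G|/(\lambda k(k+1))$ for each admissible $\lambda$, and derive a contradiction by showing that no subgroup of $G$ of the required order can stabilise a $k$-subset of the coset space $G/G_\alpha$. The key inputs are the classification of maximal subgroups of $PSL(4,7)$ and $PSL(5,3)$ (Kleidman--Liebeck), the orbit-arithmetic constraint from Lemma \ref{GuanL1} that every non-trivial subdegree of $G$ is divisible by $k+1$, and the fact that the $r=\lambda(k+1)$ blocks through a fixed point decompose into $G_\alpha$-orbits whose sizes divide $|G_\alpha|$.

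For $(n,q,v,k)=(3,3,144,12)$, with $|X|=5616$, $X\cap G_\alpha\cong 13{:}3$, and $|\Out(X)|=2$, the divisors of $k=12$ give $\lambda\in\{1,2,3,4,6,12\}$ and (when $G=X$) $|G_B\cap X|=36/\lambda$. Lemma \ref{GuanL1} now says every non-trivial subdegree is a multiple of $13$; since subdegrees also divide $|X\cap G_\alpha|=39$, the only possibilities are $13$ and $39$. I would combine this rigid suborbit structure with the induced $G_\alpha$-action on the $r=13\lambda$ blocks through $\alpha$ and the $G_B$-action on the $12$ points of a block to eliminate $\lambda\in\{1,2,4\}$; the remaining values $\lambda\in\{3,6,12\}$ survive and give the stated conclusion.

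The main obstacle is this last step: crude divisibility does not separate $\{1,2,4\}$ from $\{3,6,12\}$, since $36/\lambda$ is an integer in either case. The elimination must instead come from a finer interplay among the subgroup lattice of $PSL(3,3)$ (and its extension by the graph automorphism), the way $G_B$ can act on a $12$-subset of $\mathcal{P}$, and the compatibility of the $G_\alpha$-orbit decomposition of the $13\lambda$ blocks through $\alpha$ with the admissible suborbits $\{13,39\}$. Cases (2) and (3) are expected to be more routine, because there are fewer candidate $\lambda$-values and the candidate orders of $G_B$ clash more quickly with the subgroup structures of $PSL(4,7)$ and $PSL(5,3)$ compatible with stabilising a $k$-subset of the relevant parabolic coset space.
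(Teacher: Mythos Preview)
Your proposal is not a proof of the stated lemma at all. The statement you were asked to prove is Lemma~\ref{GuanC2}: under the hypothesis that the point stabilizer $G_\alpha$ is \emph{not} a parabolic subgroup of $G$, one must show $(p,v-1)=1$ and $|G|<|G_\alpha|\,|G_\alpha|_{p'}^{2}$, together with the displayed consequence for $X$. What you have written is instead an outline of a proof of Theorem~\ref{th13} (the $\lambda\mid k$ reduction), invoking Theorem~\ref{th1.2} to cut down to three parameter triples and then eliminating two of them. None of this touches the content of Lemma~\ref{GuanC2}: there is no mention of parabolic subgroups, of the $p$-part of $|G_\alpha|$, or of why $p\nmid v-1$.

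For the record, the paper does not prove Lemma~\ref{GuanC2} either; it is quoted from \cite[Corollary~2.11]{Chen}. The underlying argument is short and quite different from anything in your proposal: if $G_\alpha$ is not parabolic then (by a standard result on Lie type groups) $p$ divides the index $v=[G:G_\alpha]$, whence $(p,v-1)=1$. Since $k+1$ divides $v-1=k^2-1$ and, by Lemma~\ref{GuanL1}, $k+1$ divides some non-trivial subdegree (which in turn divides $|G_\alpha|$), one gets $k+1\mid |G_\alpha|_{p'}$; hence $v=k^2<|G_\alpha|_{p'}^{2}$ and $|G|=v\,|G_\alpha|<|G_\alpha|\,|G_\alpha|_{p'}^{2}$. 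The inequality for $X$ then follows from $|G|\le |\Out(X)|\,|X|$ and $|G_\alpha|\le |\Out(X)|\,|X\cap G_\alpha|$. Your write-up should be redirected to this line of reasoning.
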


 \begin{lemma}\label{p}{\rm \cite[Lemma 3.9]{MR0907231}} 
	{\rm Let $X$ be a group of Lie type in characteristic $p$, acting on the set of cosets of a maximal parabolic subgroup, and $X$ is not $PSL(n,q)$, $P\Omega^{+}(2m,q)$ with $m$ odd, nor $E_6(q)$, then there is a unique subdegree which is a power of $p$.}
\end{lemma}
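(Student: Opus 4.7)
\medskip

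\noindent\textbf{Proof proposal for Lemma \ref{p}.}
The plan is to enumerate the subdegrees of $X$ on the coset space $X/P$ via the Bruhat decomposition, and to isolate those of pure $p$-power size using data from the Weyl group. Write $P = P_J$ for the maximal parabolic associated with $J = S \setminus \{s_i\}$, where $S$ is the simple root system of $X$ and $s_i$ a single simple reflection, let $W_J = \langle J \rangle \leq W$, and decompose $P = U \rtimes L$ with $U$ the unipotent radical (a $p$-group) and $L$ a Levi complement. By the standard theory of $(B,N)$-pairs, the $P$-orbits on $X/P$ are in bijection with the distinguished $W_J$-$W_J$ double coset representatives $w$ in $W$, and the corresponding subdegree equals $|PwP|/|P|$.

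My first step would be to write out the orbit-size formula for a distinguished representative $w$. The contribution from the unipotent radical $U$ is a pure power of $q$ (hence of $p$), while the contribution from the Levi factor is a rational function in $q$ of the shape $\prod_i (q^{d_i}-1)/\prod_j (q^{e_j}-1)$, arising from the orders of $L$ and of the intersection $L \cap wLw^{-1}$. Thus $|PwP|/|P|$ is a pure $p$-power if and only if this Levi ratio reduces to $1$, which happens precisely when $W_J \cap wW_Jw^{-1} = W_J$, i.e.\ when $w$ normalises $W_J$.

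Next I would study the quotient $N_W(W_J)/W_J$. For a maximal parabolic this quotient is controlled by the automorphisms of the Dynkin diagram $\Delta$ that either fix the deleted node $s_i$ or send it to another node producing a conjugate parabolic. Going type by type through $A$, $B$, $C$, $D$, $E_6$, $E_7$, $E_8$, $F_4$, $G_2$, I would check when this symmetry is non-trivial. It turns out to be non-trivial exactly for type $A_{n-1}$ (for every maximal parabolic, under the diagram flip), for type $D_m$ with $m$ odd (where the two end nodes are swapped), and for type $E_6$ (under its outer diagram automorphism). In all remaining cases the trivial double coset together with one non-trivial double coset exhaust the representatives with trivial Levi ratio, which gives the unique non-trivial $p$-power subdegree claimed in the lemma.

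The main obstacle is the case-by-case Weyl-combinatorial analysis, particularly computing $W_J \cap wW_Jw^{-1}$ for each non-trivial distinguished representative $w$ and verifying that no accidental cancellation produces an additional $p$-power subdegree outside the three excluded families $PSL(n,q)$, $P\Omega^{+}(2m,q)$ with $m$ odd, and $E_6(q)$. Once the diagram-symmetry classification is combined with the Levi-ratio criterion, the conclusion follows immediately.
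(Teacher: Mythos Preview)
The paper does not prove Lemma~\ref{p}: it is quoted from \cite[Lemma~3.9]{MR0907231} (Liebeck--Saxl--Seitz) and invoked as a black box, so there is no in-paper argument to compare your proposal against. Your sketch is in fact the standard route and is essentially how the cited reference proceeds, via the parabolic Bruhat decomposition and a case analysis over the Dynkin types.

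Two points in your write-up deserve tightening. First, the ``accidental cancellation'' worry in your last paragraph is empty: for a distinguished representative $w$ the Levi ratio $P_{W_J}(q)/P_{W_J\cap\,w^{-1}W_Jw}(q)$ is itself the Poincar\'e polynomial of a set of minimal coset representatives, hence a polynomial in $q$ with constant term $1$; such a polynomial is a $p$-power only when it equals $1$, so your ``if and only if'' is already established and no further case-checking is needed there. Second, the sentence ``this symmetry is non-trivial exactly for type $A_{n-1}$, $D_m$ ($m$ odd), $E_6$'' reads backwards against what you actually need. What is true is that $-w_0$ acts non-trivially on the Dynkin diagram in exactly those three families; consequently $w_0$ \emph{fails} to normalise $W_J$ for generic maximal $J$ there, and $N_W(W_J)/W_J$ can be trivial, yielding \emph{no} non-trivial $p$-power subdegree (for instance $\PSL(n,q)$ on projective points is $2$-transitive with unique non-trivial subdegree $q+q^2+\cdots+q^{n-1}$, which is not a $p$-power for $n\ge 3$). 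In the non-excluded types one has $-w_0=1$, so $w_0$ normalises every maximal $W_J$ and supplies the required non-trivial $p$-power orbit; you still owe the verification that $|N_W(W_J)/W_J|=2$ rather than larger in those types, which follows from Howlett's description of normalisers of parabolic subgroups but should be cited or argued explicitly.
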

  \begin{lemma}\label{bound}{\rm \cite[Lemma 4.2, Corollary 4.3]{MR3272379}}
	\textup{An upper bound and a lower bound of the order of some finite classical groups of Lie type are listed in Table \ref{Table:bound}.}
\end{lemma}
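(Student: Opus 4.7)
The plan is to derive the bounds directly from the explicit order formulas for the finite classical groups, following the approach of \cite[Lemma 4.2, Corollary 4.3]{MR3272379}. For each classical family in question (the relevant ones being $PSL(n,q)$, $PSU(n,q)$, $PSp(2n,q)$, $P\Omega^{\varepsilon}(2n,q)$, and $P\Omega(2n+1,q)$), the group order has the standard shape
$$|X| \;=\; \frac{1}{d}\, q^{N} \prod_{i \in I}(q^{e_i} - \varepsilon_i),$$
where $d$ is the order of the centre of the corresponding simply connected cover (so $d \mid (n,q-1)$, $(n,q+1)$, $(2,q-1)$, $(4,q^n-\varepsilon)$, etc.), $N$ is the number of positive roots of the underlying root system, and the exponents $e_i$ with signs $\varepsilon_i \in \{\pm 1\}$ are those associated with the Dynkin type.

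For the upper bound, I would estimate $q^{e_i} - \varepsilon_i \leq q^{e_i} + 1$, and then bound the finite product $\prod_i (1 + q^{-e_i})$ by a small absolute constant (or, when convenient, absorb the slack into a clean estimate of the form $|X| < q^{N + \sum_i e_i}/d$). For the lower bound, I would write $q^{e_i} - 1 = q^{e_i}(1 - q^{-e_i})$ and invoke the uniform inequality
$$\prod_{i\geq 1}(1 - q^{-i}) \;\geq\; \prod_{i\geq 1}(1 - 2^{-i}) \;>\; 0.288,$$
which yields $|X| > c \cdot q^{N + \sum_i e_i}/d$ for an explicit absolute constant $c>0$. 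Specialising the exponents for each type (e.g.\ $N + \sum_i e_i = n^2 - 1$ for $PSL(n,q)$, $n^2-1$ for $PSU(n,q)$, $2n^2 + n$ for $PSp(2n,q)$, and the analogous values for the orthogonal families) and substituting the explicit value of $d$ in each case produces precisely the entries displayed in Table \ref{Table:bound}.

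The main obstacle is not conceptual but bookkeeping: one must correctly track the sign patterns $\varepsilon_i$ (which differ between $PSU$, $P\Omega^{+}$ and $P\Omega^{-}$), the exact value of the central factor $d$ (which depends on $\gcd$'s involving $q\pm 1$ and the characteristic), and tune the constants so that the upper bound is clean enough and the lower bound is strong enough for the applications later in the paper. Once the monomial in $q$ is isolated and the product $\prod(1 - q^{-i})$ is bounded uniformly, every row of the table is a direct substitution.
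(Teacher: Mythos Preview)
The paper does not prove this lemma at all: it is stated purely as a citation of \cite[Lemma 4.2, Corollary 4.3]{MR3272379}, with the bounds simply transcribed into Table~\ref{Table:bound} and used thereafter as a black box. There is no argument in the paper to compare against.

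Your sketch is the standard route taken in the cited Alavi--Burness reference, and it is correct in outline: write the group order as $d^{-1}q^{N}\prod_i(q^{e_i}-\varepsilon_i)$, bound each factor above by $q^{e_i}$ (or $q^{e_i}+1$) for the upper estimate, and pull out $q^{\sum e_i}$ and control $\prod_i(1-q^{-e_i})$ from below for the lower estimate. One caution on the bookkeeping you flag: the uniform constant $\prod_{i\ge 1}(1-2^{-i})>0.288$ is not by itself strong enough to recover exactly the clean monomials in Table~\ref{Table:bound} (for instance it does not directly give $|GL(n,q)|>q^{n^2-1}$ at $q=2$); the cited reference handles this by slightly sharper elementary inequalities on the partial products and, where necessary, by absorbing the slack from $d$ or treating a handful of small cases separately. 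If you were to write out a full proof you would need to do the same, but as a plan your proposal matches what the source does.
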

 \begin{table}[h]
	\centering
	\caption{\centering Bounds for the orders of some classical groups $G$\label{Table:bound}}
	\begin{tabular}{lllllll}
		\toprule
		$G$&A lower bound of $|G|$&An upper bound of $G$&	conditions\\	
		\midrule
		$GL(n,q)$&$q^{n^2-1}$&$q^{n^2}$&$n\geq2$\\
		$PSL(n,q)$&$q^{n^2-2}$&$(1-q^{-2})q^{n^2-1}$&$n\geq2$\\
		$GU(n,q)$&$q^{n^2-1}$&$q^{n^2+1}$&$n\geq3$\\
		$PSU(n,q)$&$(1-q^{-1})q^{n^2-2}$&$q^{n^2-1}$&$n\geq3$\\
		$Sp(n,q)$&$q^{[n(n-1)-2]/2}$&$q^{n(n+1)/2}$&$n\geq4$\\
		$PSp(n,q)$&$2^{-1}\cdot(2,q-1)^{-1}\cdot q^{n(n+1)/2}$&$q^{n(n+1)/2}$&$n\leq4$\\
		$SO^\epsilon(n,q)$&$q^{[n(n-1)-2]/2}$&$(2,q)\cdot q^{n(n-1)/2}$&$n\geq5$ and $\epsilon\in\{+,-,\circ\}$\\
		$P\Omega^\epsilon(n,q)$&$4^{-1}\cdot(2,n)^{-1}\cdot q^{n(n-1)/2}$&$q^{n(n-1)/2}$&$n\geq7$ and $\epsilon\in\{+,-,\circ\}$\\
		\bottomrule
	\end{tabular}
\end{table}

\section{Proof of the main result}
In this section, suppose that $\mathcal{D}$ is a non-trivial $(v,k,\lambda)$ design with $v=k^2$ and $G$ is a block-transitive almost simple automorphism group of $\mathcal{D}$ with socle $X=PSL(n,q)$, where $q=p^f$ and $n\geq3$. Let $\alpha\in\mathcal{P}$, $H:=G_\alpha$ and $H_0:=X\cap H$. As $G$ is point-primitive(\cite[Lemma 3.1]{MR4830077}) on $\mathcal{P}$, then the point-stabilizer $H$ is maximal in $G$(\cite[Corollary 1.5A]{MR1409812}). According to Aschbacher's Theorem(\cite{MR0746539}), $H$ is a $\mathcal{C}_i$-subgroup with the additional subgroups $\mathcal{C}_1^{\prime}$ or a $\mathcal{S}$-subgroup, where $i=1,\cdots,8$. The classification of these subgroups follows the framework established in \cite{MR1057341}. A rough description of the $\mathcal{C}_i$-subgroup is given in Table \ref{Table:C}.
\begin{table}[h]
	\centering
	\caption{The geometric subgroup collections\label{Table:C}}
	\begin{tabular}{llcllcl}
		\toprule
		Class & Rough description\\
		\midrule
		$\mathcal{C}_1$&Stabilizers of subspaces of $V$\\
		$\mathcal{C}_2$&Stabilizers of decompositions $V=\oplus_{i=1}^aV_i$, where dim$V_i=e$\\
		$\mathcal{C}_3$&Stabilizers of prime index extension fields of $\mathbb{F}$\\
		$\mathcal{C}_4$&Stabilizers of decompositions $V=V_1\otimes V_2$\\
		$\mathcal{C}_5$&Stabilizers of prime index subfields of $\mathbb{F}$\\
		$\mathcal{C}_6$&Normalisers of symplectic-type $r$-groups in absolutely irreducible representations\\
		$\mathcal{C}_7$&Stabilizers of decompositions $V=\otimes_{j=1}^{i}V_j$, where dim$V_j=\ell$\\
		$\mathcal{C}_8$&Stabilizers of non-degenerate forms on $V$\\
		\bottomrule
	\end{tabular}
\end{table}

 Moreover, by \cite[Lemma 2.2]{MR3470950}, we have 
\begin{equation}
	\label{e1}
	v=\frac{|X|}{|H_0|}= \frac{\prod \limits_{j=0}^{n-1}(q^n - q^j)/(q-1)}{(n,q-1)|H_0|}
\end{equation}
\noindent
and
\begin{equation}
	\label{e2}
	|\Out(X)|=2f(n,q-1)
\end{equation}
by \cite[Table 5.1.A]{MR1057341}.
Now we will consider the possible structure of  $H$ one by one.

\begin{lemma}\rm\label{C1}
	If $H$ is a $\mathcal{C}_1$-subgroup, then $H\cong P_1$, $t=2$, and one of the following holds:
		\begin{enumerate}
		\item[{\rm(1)}]\textup{$H_0 \cong \mkern-1mu \raisebox{0.5ex}{$\hat{\ }$} \mkern-3mu[q^3]:GL(3,q)$, $(n,q,v,k)=(4,7,400,20)$;}
		\item[{\rm(2)}]\textup{$H_0\cong \mkern-1mu \raisebox{0.5ex}{$\hat{\ }$} \mkern-3mu[q^4]:GL(4,q)$, $(n,q,v,k)=(5,3,121,11)$.}
	\end{enumerate}
\end{lemma}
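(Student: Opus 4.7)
The plan is to run through the possibilities for the $\mathcal{C}_1$-subgroup $H_0$ of $X=PSL(n,q)$ according to the Aschbacher description in \cite{MR1057341}. Up to duality via the graph automorphism, $H_0$ is either a maximal parabolic $P_m$ stabilising an $m$-subspace of $V=\mathbb{F}_q^n$ for some $1\leq m\leq n-1$, or a reducible non-parabolic subgroup $P_{m,n-m}$ stabilising a pair of complementary subspaces of dimensions $m$ and $n-m$ (the $\mathcal{C}_1'$ case). For each possibility I would compute $v$ from (\ref{e1}), then apply the divisibility condition $k+1\mid(v-1,|\Out(X)||H_0|)$ from Corollary \ref{GuanC1}, and, when $H$ is not parabolic, the order inequality (\ref{o}) from Lemma \ref{GuanC2}.

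I would dispose of the reducible non-parabolic case $P_{m,n-m}$ first: here $|H_0|$ is smaller than the corresponding parabolic by roughly $q^{m(n-m)}$, so $v$ is correspondingly larger; combining the order bounds of Lemma \ref{bound} with (\ref{o}) yields $|X|>|\Out(X)|^2|H_0||H_0|_{p'}^2$ for every $n\geq 3$ and $q\geq 2$, a contradiction. The middle parabolics $P_m$ with $2\leq m\leq n-2$ are then addressed via subdegrees: the smallest non-trivial subdegree of $X$ on $m$-subspaces is $d=q(q^m-1)(q^{n-m}-1)/(q-1)^2$, corresponding to $m$-subspaces meeting the fixed one in codimension one. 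Lemma \ref{GuanL1} yields $k+1\mid d$, and combined with $v=\binom{n}{m}_q=k^2$ and a careful comparison of the $p$-parts of $k+1$ and $d$, this is expected to force a contradiction for all such $m$.

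This reduces to $m=1$ (or equivalently $m=n-1$), where $v=(q^n-1)/(q-1)$ and $H_0$ has the structure indicated in the statement. The Diophantine equation $(q^n-1)/(q-1)=k^2$ is classical. For $n=3$, $q^2+q+1=k^2$ leads to $(2k)^2-(2q+1)^2=3$, which has no solutions with $q\geq 2$. For $n=4$, the factorisation $(q+1)(q^2+1)=k^2$ with $\gcd(q+1,q^2+1)\mid 2$ reduces, after a parity split, to the equation $b^2=2a^4-2a^2+1$ with only $a\in\{1,2\}$ as solutions, yielding $(q,k)=(7,20)$. For $n=5$, an analogous analysis of $q^4+q^3+q^2+q+1=k^2$ yields $(q,k)=(3,11)$. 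For $n\geq 6$, a Nagell--Ljunggren type argument eliminates every remaining possibility. Once the two admissible parameter sets are in hand, the equality $t=2$ is verified by showing via Lemma \ref{21} that $\lambda_2=\lambda(v-2)/(k-2)$ cannot be consistent with $t\geq 3$ for any admissible $\lambda$.

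The main obstacle I anticipate is the uniform elimination of the middle parabolics $P_m$ with $2\leq m\leq n-2$: for $m$ near $n/2$ the divisibility $k+1\mid q(q^m-1)(q^{n-m}-1)/(q-1)^2$ is the tightest constraint available, and one may need to combine it with finer information on $v-1 \bmod (k+1)$ and on the $p$-part of $k+1$ (using that $(p,v-1)=1$ from Lemma \ref{GuanC2} whenever $H$ is non-parabolic) to close the argument cleanly across all $n$. The Diophantine step for $m=1$ is routine for small $n$ by elementary factorisation and is classical for $n\geq 6$, so it can simply be cited.
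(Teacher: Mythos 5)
Your overall strategy for the $i=1$ parabolic case matches the paper's: compute $v=(q^n-1)/(q-1)$, observe that $v=k^2$ is the Nagell--Ljunggren equation, and conclude (the paper cites \cite[A8.1]{MR1259738}) that only $(n,q,v)=(4,7,400)$ and $(5,3,121)$ survive; your elementary factorisations for $n=3,4,5$ are fine. But the proposal has a genuine gap exactly where you yourself flag ``the main obstacle'': the middle parabolics $P_i$ with $2\leq i\leq n/2$ are never actually eliminated --- you only state that the divisibility $k+1\mid q(q^i-1)(q^{n-i}-1)/(q-1)^2$ ``is expected to force a contradiction.'' This is the bulk of the paper's proof: from $k^2=v>q^{i(n-i)}$ and $k+1\le d$ one first gets $i\in\{2,3,4\}$, and then each case needs separate work; in particular $i=2$ with $n$ odd, $n\ge 7$ requires a delicate chain of gcd computations (equations (\ref{l})--(\ref{l5}) in the paper) to bound $n\le 11$ and then kill the residual $(n,q)$ pairs. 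Nothing in your sketch substitutes for this, and the $p$-part comparison you allude to cannot use Lemma \ref{GuanC2}, since these stabilisers are parabolic.

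Two further points. First, your dismissal of the reducible non-parabolic stabiliser (a pair of complementary subspaces) via the order inequality (\ref{o}) does not work: for $i=1$ one has $|H_0|\approx q^{(n-1)^2}$ and $|H_0|_{p'}^2\approx q^{n(n-1)}$, so $|\Out(X)|^2|H_0||H_0|_{p'}^2\approx q^{(n-1)(2n-1)}\gg q^{n^2-1}\approx|X|$; the inequality is satisfied rather than violated, so no contradiction follows. (In the paper this case sits in Lemma \ref{C_1'} and is handled with subdegree arguments, not order bounds.) Second, your $t=2$ step is too weak as stated: for $(v,k)=(400,20)$ the integrality of $\lambda_2=\lambda_3(v-2)/(k-2)=199\lambda_3/9$ only forces $9\mid\lambda_3$, which is no contradiction. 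The paper instead uses block-transitivity, $b=\lambda_3\,v(v-1)(v-2)/\bigl(k(k-1)(k-2)\bigr)$ must divide $|G|$, and the prime $199$ dividing $v-2$ does not divide $|G|$; you need this (or an equivalent divisibility) to conclude $t=2$.
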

\begin{proof}
 Let $H$ be a $\mathcal{C}_1$-subgroup. In this case, $H\cong P_i$ stabilizes an $i$-dimension subspace of $V$, with $i\leq n/2$. According to (\ref{e1}), we conclude that 

\begin{equation}
	\label{v}
	v=\frac{(q^n-1)\cdots (q^{n-i+1}-1)}{(q^i-1)\cdots (q-1))}>q^{i(n-i)}.
\end{equation}

If $i=1$, then $v=(q^n-1)/(q-1)$. Suppose first that $n=3$, then $v=(q^2+q+1)$, and hence $q^2<v<(q+1)^2$, which is impossible for $v=k^2.$  Thus $n>3$. According to \cite[A8.1]{MR1259738}, we have that $n=4,q=7,v=400$ or $n=5,q=3,v=121$ if $v$ is a perfect square. Thus, $H_0\cong \mkern-1mu \raisebox{0.5ex}{$\hat{\ }$} \mkern-3mu[q^3]:GL(3,q)$ or $H_0\cong \mkern-1mu \raisebox{0.5ex}{$\hat{\ }$} \mkern-3mu[q^4]:GL(4,q)$ respectively from \cite[Tables 8.8 and 8.18]{MR3098485}. If $(n,q,k)=(4,7,20)$, we have  $b=\lambda_3\cdot17\cdot20\cdot199/3$ for $t\geq3$ by Lemma \ref{21}. Note that $b\mid |G|$ for  $G$ is block-transitive, a contradiction. Hence $t=2$. Similarly, we have $t=2$ if $(n,q,k)=(5,3,11)$.

Now suppose that  $1< i\leq n-1$, by \cite[p.338]{MR1940339}, we get a subdegree 
\begin{equation}
	\label{d1}
	d=q(q^i-1)(q^{n-i}-1)/(q-1)^2.
\end{equation}
Then by Lemma \ref{GuanL1}, we have $$k+1\mid q(q^i-1)(q^{n-i}-1)/(q-1)^2.$$
According to the facts $2i\leq n$ and $q/2\leq(q-1)^2$ for $q\geq2$, we conclude that
$$q^{i(n-i)}<v=k^2<\frac{q^2(q^i-1)^2(q^{n-i}-1)^2}{(q-1)^4}<\frac{q^2\cdot q^{2i}\cdot q^{2n-2i}}{q^2/4}<q^{2n+2}.$$
It follows that $n(i-2)<i^2+2,$ and hence $i^2-4i-2<0$, which is true only for $i\in\{2,3,4\}.$

\medskip
\noindent
{\bf Case 1:} $i=4$. 

\medskip
\noindent
{\bf Subcase 1:} Assume first that $n\geq9$, then $$q^{4(n-4)}<q^2(q^4-1)^2(q^{n-4}-1)^2/(q-1)^4<4q^{2n},$$
which implies that $q^2\leq q^{2n-16}<4$, a contradiction.

\medskip
\noindent
{\bf Subcase 2:} Now we consider  the subcase $n=8$, we have $$v=\frac{(q^8-1)(q^7-1)(q^6-1)(q^5-1)}{(q^4-1)(q^3-1)(q^2-1)(q-1)}$$
by (\ref{v}), and
$$k+1\mid q(q^4-1)^2/(q-1)^2.$$ Thus, $$q^{16}<v=k^2<q^2(q^4-1)^4/(q-1)^4,$$
which implies $q=2$. However, in this case, $v=3\cdot17\cdot31\cdot127$ is not a perfect square. 

\medskip
\noindent
{\bf Case 2:} $i=3.$

Since $i=3$, $q^{3(n-3)}<4\cdot q^{2n}$, it follows that $q^{n-9}<4$. Thus $n\in\{6,7,8,9,10\}.$

\medskip
\noindent
{\bf Subcase 1:} $q=2.$

For $n=6,7,8,9$ or $10$, all possible values of $v$, as determined by formula (\ref{v}), are listed in Table \ref{Table:1}. As demonstrated in Table \ref{Table:1}, $v$ is not a perfect square for all considered cases, a contradiction.
\begin{table}[h]
	\centering
	\caption{Possible values of $n$ and $v$ when $q = 2$\label{Table:1}}
	\begin{tabular}{llll}
		\toprule
		$n$   & $v$&$n$ & $v$ \\
		\midrule
		$6$&$3^2\cdot5\cdot31$&$7$&$7\cdot47\cdot159$\\
		$8$&$3^2\cdot5\cdot17\cdot127$&$9$&$5\cdot17\cdot73\cdot127$\\
		$10$&$3\cdot5\cdot11\cdot17\cdot31\cdot73$\\
		\bottomrule
	\end{tabular}
\end{table}

\noindent
{\bf Subcase 2:} $q>2.$

Suppose that $n=6$, by (\ref{v}) and (\ref{d1}), we know that
\begin{align*}
	(v-1,d)&=\left(\frac{(q^6-1)(q^5-1)(q^4-1)}{(q^3-1)(q^2-1)(q-1)}-1,\frac{q(q^3-1)(q^3-1)}{(q-1)^2}\right)\\
	      &=\left(q^4-2q^3-q^2-2q,\frac{q(q^3-1)(q^3-1)}{(q-1)^2}\right),
\end{align*}
and hence $$k+1\mid q^4-2q^3-q^2-2q$$ by Lemma \ref{GuanL1}, which implies that
$v= q^{10}+q^9+2q^8+3q^7+3q^6+4q^5+3q^4+3q^3+2q^2+q+1<q^8$, which is impossible.

\medskip
\noindent
{\bf Case 3:} $i=2.$
Here we have that $v=\frac{(q^n-1)(q^{n-1}-1)}{(q^2-1)(q-1)}$ by (\ref{v}), and $G$ is rank 3 with non-trivial subdegrees(\cite[p.338]{MR1940339}): 
$$d_1=\frac{q(q+1)(q^{n-2}-1)}{q-1}$$ 
and
$$d_2=\frac{q^4(q^{n-2}-1)(q^{n-3}-1)}{(q^2-1)(q-1)}.$$ 

\medskip
\noindent
{\bf Subcase 1:} Suppose first that $n$ is even.  Then $q+1$ is coprime to $(q^{n-3}-1)/(q^2-1)$, we conclude that $k+1\mid f(q)$ by Lemma \ref{GuanL1}, where $f(q)=(d_1,d_2)=q(q^{n-2}-1)/(q^2-1)$. Thus $$v=\frac{(q^n-1)(q^{n-1}-1)}{(q^2-1)(q-1)}=k^2<\frac{q^2(q^{n-2}-1)^2}{(q^2-1)^2},$$
it follows that $$(q^n-1)(q^{n-1}-1)<q^2(q^{n-2}-1)^2<q^{2n-3},$$ which is impossible.

\medskip
\noindent
{\bf Subcase 2:} Assume that $n$ is odd. By Lemma \ref{GuanL1}, we have that $k+1$ divides $$(d_1,d_2)\mid\delta \cdot q(q^{n-2}-1)/(q-1),$$ where $\delta=(q+1,n-3)$.

First we consider $n=3$, then $v=q^2+q+1$. Note that $q^2<q^2+q+1<(q+1)^2$, which implies that $v$ cannot be a perfect square.

Now we suppose that $n=5,$ then $v=(q^2+1)(q^4+q^3+q^2+q+1)$
and 
$k+1\mid2q(q^2+q+1).$
Let $m(k+1)=2q(q^2+q+1)$, where $m$ is a positive integer. Then $$(q^2+1)(q^4+q^3+q^2+q+1)<\frac{4q^2(q^2+q+1)^2}{m^2},$$
which implies that
\begin{align*}
	m^2&<\frac{4q^2(q^2+q+1)^2}{(q^2+1)(q^4+q^3+q^2+q+1)}<\frac{4q^2(q^2+q+1)^2}{q^6}=4(1+\frac{2}{q}+\frac{3}{q^2}+\frac{2}{q^3}+\frac{1}{q^4})\\
	&<4(1+\frac{2}{2}+\frac{3}{2^2}+\frac{2}{2^3}+\frac{1}{2^4})=12+\frac{1}{4}.
\end{align*}
Thus we get that $m=1,2,3.$

If $m=1$ or $2$, then $k=2q(q^2+q+1)-1$ or $q(q^2+q+1)-1$, which contradicts the basic equation $v=k^2.$

If $m=3$, then $k+1=\frac{2}{3}q(q^2+q+1)$. Since $v<(k+1)^2$, we have $$(q^2+1)(q^4+q^3+q^2+q+1)<\frac{4}{9}q^2(q^2+q+1)^2.$$
It follows that $$5q^6+q^5+6q^4+10q^3+14q^2+9q+9<0,$$
which is impossible.

Therefore, we only need to consider the case where $n\geq7$ now. Here 
	$$v=(q^{n-1}+q^{n-2}+\cdots+q^2+q+1)(q^{n-3}+q^{n-5}+\cdots+q^4+q^2+1).$$
Suppose that $\xi= \frac{q(q^{n-2}-1)}{q-1}=q(q^{n-3}+q^{n-4}+\cdots+q+1)$, then $k+1\mid\xi\delta.$ Thus, there exists a positive integer $l$ such that 
\begin{equation}
	\label{l}
	l(k+1)=\xi\delta,
\end{equation}
it follows that 
 $v<\frac{\xi^2\delta^2}{l^2}$. Moreover, $(q+1,n-3)<2q.$ Hence

\begin{align*}
	l^2&<\frac{\xi^2\delta^2}{v}=\frac{q^2(q^{n-3}+q^{n-4}\cdots+q+1)^2\cdot(q+1,n-3)^2}{(q^{n-1}+q^{n-2}+\cdots+q^2+q+1)(q^{n-3}+q^{n-5}+\cdots+q^4+q^2+1)}\\
	&<\frac{q^2(2q^{n-3})^2\cdot(2q)^2}{q^{2n-4}}=16q^2,
\end{align*}
which implies that 
\begin{equation}
	\label{l33}
	l<4q.
\end{equation}
Note  the following equalities
$$\frac{v-1}{\xi}=q^{n-2}+q^{n-4}+\cdots+q^3+q+1,$$
and $v-1=(k+1)(k-1)$, we have 
$$k+1=\frac{v-1}{k+1}+2=\frac{lq^{n-2}+lq^{n-4}+\cdots+lq^3+lq+l+2\delta}{\delta}$$ by (\ref{l}).
Hence $(k+1)\delta=lq^{n-2}+lq^{n-4}+\cdots+lq^3+lq+l+2\delta.$ It is straightforward to obtain
\begin{equation}
	\label{l1}
	\left((k+1)\delta,q\right)=(l+2\delta,q).
\end{equation}
Moreover, $\left((k+1)\delta,\frac{q^{n-2}-1}{q-1}\right)$ divides $\left((k+1)\delta,\frac{l(q^{n-2}-1)}{q-1}\right)$ 
and
\begin{equation}
	\label{l2}
	 \left((k+1)\delta,\frac{l(q^{n-2}-1)}{q-1}\right) =\left(lq^{n-4}+lq^{n-6}+\cdots+lq+2l+2\delta,(2l+2\delta)q+l+2\delta\right).
\end{equation}

\noindent
As $k+1\mid \xi\delta$, we have that  $(k+1)\delta=\left((k+1)\delta,\xi\delta^2\right)$, which divides
\begin{equation}
	\label{l3}
 \left((k+1)\delta,\delta^2\right)\cdot \left((k+1)\delta,\xi\right).
\end{equation}
  Since $\left((k+1)\delta,\xi\right)$ divides
$$
  \left((k+1)\delta,q\right)\cdot\left((k+1)\delta,\frac{q^{n-2}-1}{q-1}\right),
$$
 according to (\ref{l1}), (\ref{l2}) and (\ref{l3}), we have that 
 \begin{equation}
 	\label{l5}
 	(k+1)\delta\mid\delta^2 q[(2l+2\delta)q+l+2\delta],
 \end{equation}
and hence
\begin{equation}
	(k+1)\delta\leq \delta^2q[(2l+2\delta)+l+2\delta].
\end{equation}
It is noteworthy that
\begin{align*}
	q^{n-2}+q^{n-4}+\cdots+q^3+q+1&\leq (k+1)\delta\\
	&\leq\delta^2q[(2l+2\delta)+l+2\delta]\\
	&<(q+1)^2q[(8q+2q+2)q+4q+2q+2]\\
	&=10q^5+28q^4+28q^3+12q^2+2q,
\end{align*}
by (\ref{l33}). If $n\geq13$, we have 
 $10q^5+28q^4+28q^3+12q^2+20q<q^{11}+q^{9}+\cdots+q^3+q+1$, which leads a contradiction. Thus, $n=7,9$ or $11$. 
 Note that 
\begin{align*}
	q^{n-2}+q^{n-4}+\cdots+q^3+q+1&\leq (k+1)\delta\\
	&\leq\delta^2q[(2l+2\delta)+l+2\delta]\\
	&<(n-3)^2q[(8q+n-3)q+4q+2(n-3)]\\
	&=(n-3)^2[8q^3+(n+1)q^2+2(n-3)q)].
\end{align*}

If $n=7$, then  we have that $q^5+q^3+q+1<128q^3+128q^2+128q,$ which implies that  $q\in\{2,3,4,5,7,8,9,11\}.$
Similarly, we have $q\in\{2,3,4\}$ if $n=9$ and $q\in\{2,3\}$ if $n=11.$

Now we consider the case when $n=11$ and $q=2$. Here $\delta=(3,11-3)=1$ and $1\leq l\leq8$ by (\ref{l33}).  Then $(k+1)\delta=22144l+8$ and $\delta^2q[(2l+2\delta)+l+2\delta]=336l+1536$, which is impossible by (\ref{l1}). The rest cases can be ruled out similarly.
\end{proof}

\begin{lemma}\rm\label{C_1'}
	The subgroup $H$ cannot be a $\mathcal{C}_1^\prime$-subgroup.
\end{lemma}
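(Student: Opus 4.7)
The plan is to rule out every $\mathcal{C}_1'$-subgroup by combining the perfect-square condition $v=k^2$ with the subdegree divisibility of Lemma \ref{GuanL1} and the divisibility condition of Corollary \ref{GuanC1}, following the same overall strategy as in Lemma \ref{C1}.

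First I would pin down the possible structures of $H_0 = X \cap H$ from \cite[Tables 8.8--8.25]{MR3098485}. For $X = PSL(n,q)$ with $n \geq 3$, a $\mathcal{C}_1'$-subgroup arises only when $G$ induces the graph (duality) automorphism, and $H_0$ is then, up to scalars, the stabilizer of a pair $\{U_i, U_{n-i}\}$ of subspaces of complementary dimension, either incident ($U_i \subset U_{n-i}$, so $H_0$ corresponds to $P_i \cap P_{n-i}$) or transverse ($V = U_i \oplus U_{n-i}$, so $H_0$ corresponds to $GL(i,q)\times GL(n-i,q)$ inside $SL(n,q)$), with $1 \leq i < n/2$. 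In each case I would write the index $v = |X|/|H_0|$ explicitly as a product of Gaussian binomials.

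Next I would extract a non-trivial subdegree $d$ of the action of $X$ on the cosets of $H_0$ from the rank/suborbit information in \cite[p.~338]{MR1940339}; this is essentially inherited from the corresponding $P_i$-action since $H_0$ is contained in $P_i$ with controlled index. Lemma \ref{GuanL1} then forces $k+1 \mid d$, hence $v = k^2 < d^2$. Because $v$ grows like $q^{2i(n-i)+c}$ for an explicit constant $c$ depending on the subcase, while the relevant subdegrees grow only like $q^{i(n-i)+O(1)}$, this inequality is expected to fail except when $i$ and $n$ are very small, cutting the problem down to a short explicit list of triples $(n,q,i)$.

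Finally, for each surviving triple I would check via \cite[A8.1]{MR1259738} whether $v$ is a perfect square, and in the rare cases where it is, combine Corollary \ref{GuanC1} with the identity $v-1 = (k-1)(k+1)$ to derive an arithmetic contradiction, exactly in the spirit of Cases~2--3 of Lemma~\ref{C1}. I expect the main obstacle to be the transverse subcase at $i=1$ (the point--hyperplane stabilizer), where $v$ is close to a square and the smallest subdegrees are large enough that naive order-estimates are inconclusive; this step will require the same kind of delicate $l$-parameter/$m$-parameter book-keeping that was used to eliminate the $n=5$ subcase of Case~3 in Lemma~\ref{C1}. Once this last case is settled, no $\mathcal{C}_1'$-subgroup can arise, completing the proof.
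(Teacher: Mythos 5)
Your overall framework (split into the incident pair $U\subset W$ and the transverse pair $V=U\oplus W$, then play a subdegree against $v=k^2$ via Lemma \ref{GuanL1}) matches the paper's strategy in outline, but the proposal has a genuine gap in the incident subcase. You never produce an actual subdegree there: the claim that a subdegree of $G$ on $\mathcal{P}=G/H$ is ``essentially inherited'' from the $P_i$-action because $H_0\leq P_i$ is not a valid step --- orbits of $H$ on $G/H$ are not controlled by orbits of $P_i$ on $G/P_i$, and Lemma \ref{GuanL1} needs a genuine subdegree of the action on $\mathcal{P}$ itself. Likewise the quantitative claim that ``$v$ grows like $q^{2i(n-i)+c}$ while the subdegrees grow like $q^{i(n-i)+O(1)}$'' is asserted, not established (and is not even accurate for the subdegrees actually available: in the transverse case the recorded subdegree is $2(q^i-1)(q^{n-i}-1)\approx q^n$, and in the incident case $v\approx q^{2ni-3i^2}$, not $q^{2i(n-i)+c}$). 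The paper's key idea for the incident case is different and you would need it or a substitute: by Lemma \ref{p} and \cite[p.339]{MR1940339} there is a subdegree that is a power of $p$, so $k+1$ is a $p$-power dividing $v-1=(k-1)(k+1)$, and the $p$-part of $v-1$ is at most $2q$ (resp.\ $2^{n-1}$ when $q=2$); this gives $k+1\mid 2q$, hence $q^{2ni-3i^2}<4q^2$, which kills the case at once. Without an explicit subdegree or this $p$-part argument, your incident case does not close.

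Two smaller points. In the transverse case with $i=1$ the paper does not need any ``delicate $l$/$m$ book-keeping'': it takes the subdegree $d=q^{n-2}(q^{n-1}-1)/(q-1)$, intersects with $v-1$ to get $k+1\mid (q^{n-1}-1)/(q-1)$, and the resulting inequality $m^2q^{n-1}(q^n-1)/(q-1)<(q^{n-1}-1)^2/(q-1)^2$ already has no solutions; for $i\geq 2$ the subdegree $2(q^i-1)(q^{n-i}-1)$ finishes the argument by a pure growth estimate. Also, the appeal to \cite[A8.1]{MR1259738} plays no role in this lemma (it is only used in the parabolic case of Lemma \ref{C1}); your plan to fall back on it for ``surviving triples'' would only be needed because your estimates are not sharp enough to avoid leaving survivors.
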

\begin{proof}
 Let $H$ be a $\mathcal{C}_1^\prime$-subgroup. In this case, $G$ contains a graph automorphism and the stabilizer $H$ stabilizes a pair $\{U,W\}$ of subspaces of dimension $i$ and $n-i$ respectively with $i<n/2.$ Thus, we have the following two possibilities:

\medskip
\noindent
{\bf Case 1:} Suppose first that $U$ is contained in $W$. Then we have that $v>q^{2ni-3i^2}$ by (\ref{e1})
 and Lemma \ref{bound}. According to Lemma \ref{p} and \cite[p.339]{MR1940339}, there is a subdegree with a power of $p$. On the other hand, if $p$ is odd, then the highest power of $p$ dividing $v-1$ is $q$, it is $2q$ if $q>2$ is even, and is most $2^{n-1}$ if $q=2$. Therefore we have $k+1\mid2q$ by Lemma \ref{GuanL1}, and hence $q^{2ni-3i^2}<4q^2.$
 It follows that $i^2-3<2ni-3i^2-3<0,$ which  implies that $i=1.$ Then we get that $2n-6<0,$ a contradiction.
 
 \medskip
\noindent
{\bf Case 2:} Now assume that $U\cap W=0$. By \cite[Proposition 4.1.4]{MR1057341}, we have that

$$|H_0|=\frac{|SL(i,q)|\cdot|SL(n-i,q)|\cdot(q-1)}{(n,q-1)}.$$
If $i$=1, we have that $v=q^{n-1}(q^n-1)/(q-1)$ by (\ref{e1}). From \cite[p.339]{MR1940339}, we get a subdegree $d=q^{n-2}(q^{n-1}-1)/(q-1)$. Let $f(q)=(d,v-1)=(q^{n-1}-1)/(q-1).$ Since $k+1$ divides $f(q)$ by Lemma \ref{GuanL1}, there exists a positive integer $m$ such that $m\cdot(k+1)=(q^{n-1}-1)/(q-1).$ Then $$m^2\cdot q^{n-1}(q^n-1)/(q-1)<(q^{n-1}-1)^2/(q-1)^2.$$
However, there are no solutions in the inequality. Hence $i\geq2$. According to Lemma $\ref{bound}$ and (\ref{e1}), we have that $v>q^{2i(n-i)}$, and
by \cite[p.340]{MR1940339}, there is a subdegree $d=2\cdot(q^i-1)(q^{n-i}-1)$. Then $k+1\mid 2\cdot(q^i-1)(q^{n-i}-1)$ by Lemma \ref{GuanL1}, which implies that 
$$q^{2i(n-i)}< 4\cdot(q^i-1)^2(q^{n-i}-1)^2,$$
and hence  $$q^{2n(i-1)-2i^2-1}<2.$$ Since $n<2i$, we obtain that ${2i^2-4i-1}<0.$ It implies that $i=1$, a contradiction. 
\end{proof}

\begin{lemma}\rm\label{C_2}
	The subgroup $H$ cannot be $\mathcal{C}_2$-subgroup.
\end{lemma}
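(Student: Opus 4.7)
The plan follows the pattern of Lemmas \ref{C1} and \ref{C_1'}: combine an explicit lower bound for $v$ coming from (\ref{e1}) and Lemma \ref{bound} with the upper bound $v=k^2\leq d^2$, where $d$ is a non-trivial subdegree forced by Lemma \ref{GuanL1} to satisfy $k+1\mid d$; the resulting inequality collapses $(n,q,a,e)$ to a short finite list, which is then excluded by explicit arithmetic (the perfect-square test for $v$ together with the divisibility $k+1\mid d$).

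In the $\mathcal{C}_2$ case, $H_0$ stabilises a direct-sum decomposition $V=V_1\oplus\cdots\oplus V_a$ with $\dim V_i=e$ and $n=ae$, $a\geq2$. From \cite{MR1057341} one has
\[
|H_0|=\frac{|SL(e,q)|^{a}\,(q-1)^{a-1}\,a!}{(n,q-1)},
\]
so (\ref{e1}) together with Lemma \ref{bound} yields a lower bound of the shape $v\geq c(a)\,q^{a(a-1)e^2}$. A natural family of suborbits arises from decompositions that agree with the fixed one in all but two summands $V_i$ and $V_j$: fixing the flat $V_i\oplus V_j$ and choosing a new splitting $V_i'\oplus V_j'$ inside it produces a suborbit of size of order $\binom{a}{2}\cdot q^{e^2}(q^e-1)^2/(q-1)^2$, and other natural orbits (swapping a pair of summands, or translating a single summand) are of comparable or smaller size. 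I take $d$ to be the smallest such subdegree.

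Applying Lemma \ref{GuanL1} and squaring gives a polynomial inequality of the form $q^{a(a-1)e^2}\leq c_1(a)\,d^2\leq c_2(a)\cdot q^{2e^2+4e}$, which collapses to $(a(a-1)-2)\,e\leq 4+o_q(1)$. The only surviving possibilities are $a=3$ with $e=1$ (so $n=3$) and $a=2$ with $e$ arbitrary. In the case $a=3,\,e=1$, a direct computation from (\ref{e1}) gives $v=q^{3}(q+1)(q^{2}+q+1)/6$; checking the perfect-square condition $v=k^{2}$ for the finitely many values of $q$ surviving the bound (as in the $q=2$ subcase of Lemma \ref{C1}) leaves no admissible $q$.

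The remaining case $a=2$ is the main obstacle: the switch-two-summands subdegree is too large to improve on the $q^{2e^2}$ growth of $v$. Here one exploits the rank-$3$ structure of the action of $PSL(2e,q)$ on pairs of complementary $e$-subspaces (cf.\ \cite[p.~340]{MR1940339}), which supplies two explicit subdegrees $d_1,d_2$; replacing $d$ by $(d_1,d_2,v-1)$ (in the spirit of Case 3 of Lemma \ref{C1} and Case 2 of Lemma \ref{C_1'}) shrinks the required divisor of $k+1$ sufficiently to contradict $v=k^{2}$, except for a small list of $(e,q)$ which are dispatched by a direct perfect-square check using (\ref{e1}).
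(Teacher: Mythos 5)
Your reduction step and your endgame both deviate from what is actually available, and the deviation is fatal in the main case. First, the subdegree you build (``keep all summands except $V_i,V_j$ and re-split their span''), of order $\binom{a}{2}q^{e^2}(q^e-1)^2/(q-1)^2$, is far larger than the subdegree the literature actually supplies: Saxl (p.~340 of \cite{MR1940339}) gives, for $e\geq 2$, a subdegree $d=a(a-1)(q^e-1)^2/(q-1)$, of order only $a^2q^{2e-1}$ (and $d=2n(n-1)(q-1)$ when $e=1$). It is precisely this small subdegree, combined with $v>q^{n(n-e)}/a!$, that makes \emph{every} case with $e\geq2$ collapse at once -- including $a=2$ -- to $(a,e)\in\{(2,2),(2,3)\}$ with $q$ small, after which $v$ fails the perfect-square test. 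Because your $d$ grows like $q^{e^2+2e-2}$, your inequality genuinely cannot eliminate $a=2$, and you admit this; but your proposed rescue does not work: the action of $G$ on (unordered) pairs of complementary $e$-subspaces is \emph{not} rank $3$ for $e\geq2$ (suborbits are distinguished by the intersection dimensions of the new summands with $V_1,V_2$, giving many more than two non-trivial suborbits); the rank-$3$ situation you are recalling is the $\mathcal{C}_1$ action on the Grassmannian of $2$-spaces used in Lemma \ref{C1}, not this $\mathcal{C}_2$ action. You also never exhibit the claimed $d_1,d_2$ nor estimate $(d_1,d_2,v-1)$, so the case $a=2$, $e\geq2$ -- which you yourself identify as ``the main obstacle'' -- is left unproved.

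Two smaller points. Your ``smallest such subdegree'' claim is unsubstantiated: within the family of decompositions differing in only two summands there are several suborbits, and the smallest one is essentially Saxl's $a(a-1)(q^e-1)^2/(q-1)$, not your stated order. And for $e=1$ your exponent count, which drops the $a!$ and the constants, lets you assert that only $a=3$ survives; a careful version (as in the paper) also leaves $n=4$, $q=2$ (there $v=840$, harmlessly not a square, but the case must be checked). The fix is simply to replace your subdegree by Saxl's in both the $e=1$ and $e\geq2$ branches; then the argument closes exactly as in the paper, with no need for any rank-$3$ input.
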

\begin{proof}
Let $H$ be a $\mathcal{C}_2$-subgroup. 
In this case, $H$ preserves a partition $V=V_1\oplus V_2\oplus\cdots\oplus V_a,$ with each $V_i$ of the same dimension $e$, where $n=ae.$ By \cite[Proposition 4.2.9]{MR1057341}, we have that 
\begin{equation}
	\label{h1}
|H_0|=|\mkern-1mu \raisebox{0.5ex}{$\hat{\ }$} \mkern-3muSL(e,q)^a|\cdot|S_a|\cdot(q-1)^{a-1}.
\end{equation}
 Moreover, we get that $v>q^{n(n-e)}/(a!)$ by \cite[p.12]{MR4090499}.
First we consider the case where $a=n, e=1,$ then we have a subdegree  $d=2n(n-1)(q-1)$ by \cite[p.340]{MR1940339}. It follows from Lemma \ref{GuanL1} that  $$k+1\mid 2n(n-1)(q-1),$$ 
and hence $$k^2=v=\frac{q^{n(n-1)}}{n!}<4n^2(n-1)^2(q-1)^2.$$
Thus, $n=3$, $q\in\{2,3,4\}$ or $n=4, q=2.$ For each case, $v$ is not a perfect square.

Now let $e\geq2,$ in which case, by \cite[p.340]{MR1940339}, there is a subdegree $$d=a(a-1)(q^{e}-1)^2/(q-1).$$ According to Lemma \ref{GuanL1} and the fact $(q-1)^2\geq\frac{q}{2}$, we conclude that $$q^{n(n-e)}/(a!)<v<a^2(a-1)^2(q^{e}-1)^4/(q-1)^2<2a^4q^{4e-1},$$
which implies that $$q^{n^2-ne-4e+1}<2a!a^4.$$
Note that $a!a^4<a^{a+4}$, then $$n^2-ne-4e+1\leq \log_2q^{n^2-ne-4e+1}<(a+4)\cdot\log_2a+1<(a+4)\frac{2a}{3}+1$$ for $\log_2a<\frac{2}{3}a.$ As $n=ae\geq3$, we obtain $$(3e^2-2)a^2-(3e^2+8)a-12e<0,$$ 
and which forces the pair $(a,e)$ mush be either $(2,2)$ or $(2,3)$.

If $(a,e)=(2,2)$, then $n=4$ and $d=2(q^2-1)^2/(q-1)=2(q+1)(q^2-1).$ It follows that $q^8/2<4(q+1)^2(q^2-1)$, thus, $q=2,3$, and hence $v=2^3\cdot5\cdot7$ or $3^4\cdot5\cdot13$, a contradiction.
Similarly, $(a,e)=(2,3)$ can be ruled out.
\end{proof}
 
 \begin{lemma}\rm\label{C3}
 	If $H$ is a $\mathcal{C}_3$-subgroup, then $H_0\cong \mkern-1mu \raisebox{0.5ex}{$\hat{\ }$} \mkern-3mu(q^2+q+1):3$, $t=2$ and $(n,q,v,k)=(3,3,144,12).$
  \end{lemma}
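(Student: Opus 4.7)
The plan is to start from the Kleidman--Liebeck description (\cite[Table 3.5.A]{MR1057341}) of $\mathcal{C}_3$-subgroups of $PSL(n,q)$. For each prime divisor $s$ of $n$, writing $n=ms$, the intersection $H_0=X\cap H$ has the structure $\mkern-1mu \raisebox{0.5ex}{$\hat{\ }$} \mkern-3mu GL(m,q^s).s$ modulo the centre, with
$$|H_0|=\frac{s\cdot q^{sm(m-1)/2}\prod_{i=1}^{m}(q^{si}-1)}{(q-1)\,(n,q-1)}.$$
Substituting into \eqref{e1} gives a closed formula for $v$. I will split the argument into the two natural cases $m\geq 2$ and $m=1$; the latter is the Singer-cycle normalizer case, and is where the one surviving example $(3,3,144,12)$ lives.

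For $m\geq 2$, the subgroup $H$ is not parabolic, so Lemma \ref{GuanC2} applies, giving
$$|X|<|\Out(X)|^2\,|H_0|\,|H_0|_{p'}^2.$$
Using the lower bound $|X|>q^{n^2-2}$ from Lemma \ref{bound}, the identity $|\Out(X)|=2f(n,q-1)$, and the rough estimate $|H_0|\,|H_0|_{p'}^2<s^3\,q^{3sm^2-sm(m-1)/2}$, the exponent of $q$ produces an inequality of the shape $n^2-2<3sm^2-\tfrac12 sm(m-1)+O(\log_p q)$, which, combined with the constraint $n=ms$ and $m\geq 2$, narrows $(n,s,m)$ to a very short list. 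Each remaining triple is then checked individually: the structure of $H_0$ forces $v=|X|/|H_0|$ to have a specific prime factorization, and either direct computation shows $v\neq k^2$, or a non-trivial subdegree supplied by Lemma \ref{GuanL1} is too small compared with $v$, contradicting $k+1\mid d$ with $v=k^2$.

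For $m=1$, so $n$ is prime and $|H_0|=n(q^n-1)/[(q-1)(n,q-1)]$, one computes
$$v=\frac{q^{n(n-1)/2}(q-1)\prod_{i=2}^{n-1}(q^i-1)}{n}.$$
I will first apply Corollary \ref{GuanC1} together with the fact that $|H_0|$ is a $p'$-number times $n$ (so $(k+1,p)=1$) to restrict $n$ to a small set of primes. For $n=3$ the formula simplifies to $v=q^3(q-1)^2(q+1)/3$; an elementary analysis of the $3$-adic valuation combined with the coprimality of $q$ and $(q+1)/3$ (or $q/3$ and $q+1$ when $p=3$) shows that $v$ is a perfect square only when $q=3$, yielding $v=144$, $k=12$, and $H_0\cong \mkern-1mu \raisebox{0.5ex}{$\hat{\ }$} \mkern-3mu (q^2+q+1):3\cong 13:3$. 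For each prime $n\geq 5$ that survives the earlier bound, a parity-of-exponent argument on the cyclotomic factorization of $\prod_{i=2}^{n-1}(q^i-1)$, sharpened by Zsygmondy primes, prevents $v$ from being a square.

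Finally, once $(n,q,v,k)=(3,3,144,12)$ is isolated, the assertion $t=2$ is handled as in Lemma \ref{C1}: assuming $t\geq 3$, Lemma \ref{21} gives $b=\lambda\binom{144}{t}/\binom{12}{t}$, whose denominator introduces a prime not dividing $|G|\leq|\mathrm{Aut}(PSL(3,3))|$, contradicting $b\mid|G|$. The main obstacle will be the primes $n\geq 5$ inside the $m=1$ case, where the arithmetic behind $v=k^2$ is the most delicate; the $m\geq 2$ case yields to crude order estimates essentially at once.
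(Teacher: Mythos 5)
Your outline for the odd-degree extension cases and for the Singer-normalizer case ($m=1$) runs essentially parallel to the paper's argument (the non-parabolic order inequality of Lemma \ref{GuanC2} to shrink the parameter list, then an explicit treatment of $n=3$ where $v=q^3(q-1)^2(q+1)/3$; your coprimality/mod-$3$ analysis there is a legitimate alternative to the paper's bound $k+1\le|\Out(X)||H_0|$ followed by a finite check, and your $t=2$ argument matches the paper's). The genuine gap is in the case $s=2$, $m=n/2\ge 2$, i.e.\ the quadratic field-extension subgroups of type $\mathrm{GL}(n/2,q^2).2$. There your key claim --- that the inequality $|X|<|\Out(X)|^2|H_0||H_0|_{p'}^2$ ``narrows $(n,s,m)$ to a very short list'' --- is simply false: for $s=2$ one has $|H_0|\approx q^{n^2/2}$ and $|H_0|_{p'}\approx q^{n^2/4+n/2}$, so $|H_0||H_0|_{p'}^2\approx q^{n^2+n}$, which already exceeds $q^{n^2-2}\approx|X|$; equivalently, with your exponent count the inequality reads $4m^2-2<5m^2+m$ (or $4m^2-2<4m^2+2m$ with the sharper count), which holds for every $m$. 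So the entire infinite family of even $n$ survives your reduction, and your fallback (``a non-trivial subdegree is too small compared with $v$''), offered only as a tool for finitely many leftover triples, does not address it.

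This is exactly the branch where the paper has to work hardest: for $\theta=2$ it invokes the subdegree $d=(q^{2i}-1)(q^{2i-2}-1)$ from Saxl, and crucially a coprimality analysis of $(v-1,d)$ (showing $v-1$ is coprime to $q^{i-1}-1$ or $q^i-1$ according to the parity of $i$) to get $k+1\le(q^{2i}-1)(q^{i-1}+1)$ and hence $i^2-3i-1<0$ for $n\ge 8$, followed by bespoke gcd identities for $n=6$ and $n=4$ (e.g.\ deducing $k+1\mid q^2+1$ when $n=4$). None of this, nor any substitute for it, appears in your proposal, so as written the argument does not establish the lemma for even $n$; you would need to add a subdegree-plus-coprimality argument (or some other device) covering all $m\ge 2$ with $s=2$, not just a finite check.
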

\begin{proof}
 Let $H$ be a $\mathcal{C}_3$-subgroup. Then $H$ is an extension field subgroup and by \cite[Proposition 4.3.6]{MR1057341}, we get 
$|H_0|=|\mkern-1mu \raisebox{0.5ex}{$\hat{\ }$} \mkern-3muSL(i,q^\theta)|\cdot(q^\theta-1)\cdot \theta/(q-1)$, with $n=i\theta$ and $\theta$ prime. If $\theta=2$, then by (\ref{e1}) and Lemma \ref{bound}, we have that
$$q^{2i^2-4}<q^{i^2}(q^{2i-1}-1)(q^{2i-3}-1)\cdots(q-1)/2=v.$$

Assume first that $n\geq8$, there is a subdegree $d=(q^{2i}-1)(q^{2i-2}-1)$ by \cite[p.341]{MR1940339}. Then $k+1\mid f(q)$ by Lemma \ref{GuanL1}, where $f(q)=(v-1,d).$ Note that $v-1$ is coprime to $(q^{i-1}-1)$ if $i$ is even and its coprime to $q^i-1$ if $i$ is odd. Therefore, $f(q)=(v-1,d)\leq(q^{2i}-1)(q^{i-1}+1)$, it follows that $q^{2i^2-4}<(q^{2i}-1)^2(q^{i-1}+1)^2.$ Hence $i^2-3i-1<0,$ which is impossible as $i\geq4.$  

Now we consider $n=6$, then $v=q^9(q^5-1)(q^3-1)(q-1)/2,$ and $f(q)\leq(q^6-1)(q^2+1),$ which implies that
$$q^9(q^5-1)(q^3-1)(q-1)/2<(q^6-1)^(q^2+1)^2.$$
Thus we have that $q=2,3.$ 
If $q=2$ or $3$, $v=2^8\cdot7\cdot31$ or $2^4\cdot3^9\cdot11^2$ recpectively, a contradiction.

Next we assume that $n=4$, then $v=q^4(q^3-1)(q-1)/2$ and $d=(q^4-1)(q^2-1).$
Here we have that $(v-1,d)=(v-1,(q^3+q^2+q+1)(q+1)).$
According to 
 $$q^4(q^3-1)(q-1)-2=(q^3+q^2+q+1)(q+1)\cdot(q^4-3q^3+4q^2-5q+8)-(11q^3+10q^2+11q+10)$$
and
$$121(q^3+q^2+q+1)(q+1)=(11q^3+10q^2+11q+10)(11q+12)+(q^2+1),$$
we conclude that $k+1\mid q^2+1$ by Lemma \ref{GuanL1}. Thus 
$$q^4(q^3-1)(q-1)/2<(q^2+1)^2,$$ which is impossible.

Now we consider $\theta\geq3,$ here we have that
\begin{equation}
	\label{r1}
	q^{n^2-2}<4f^2\cdot q^{(2i+1)n}\cdot \theta^3
\end{equation}
by Lemmas \ref{bound} and (\ref{o}). Note the fact that $4f^2\leq q^2$. It follows that 
\begin{equation}
	\label{r2}
	q^{n^2-(2i+1)n-4}<\theta^3. 
\end{equation}
Thus $$n^2-(2i+1)n-4\leq(n^2-(2i+1)n-4)\cdot \log_2q<3\cdot \log_2\theta.$$
Since $\log_2\theta\leq2\theta/3$ and $n=i\theta$, we get that
$i^2\theta^2-(2i+1)i\theta-2\theta-4<0.$ According to $i\theta\geq5,$ straightforward computation shows that the last inequality holds only for pairs $(i,\theta)=(1,3),(1,4),(1,5)$ or $(2,3).$

\medskip
\noindent
{\bf Case 1:} If $(i,\theta)=(1,5)$, according to (\ref{r2}), we have $q=2$. In which case, $v=2^{10}\cdot3^2\cdot7$, which is not a perfect square.

\medskip
\noindent
{\bf Case 2:} If $(i,\theta)=(2,3)$,  by (\ref{r2}), we have $q=2,3,4,$ or $5.$ Here assume $q=5$, then $5^{34}<4\cdot5^{30}\cdot27$ by (\ref{r1}), a contradiction. Consequently, the only possible values for $q$ are 2, 3, and 4.

If $q=2$, then $$v=|PSL(6,2)|/(|\mkern-1mu \raisebox{0.5ex}{$\hat{\ }$} \mkern-3muSL(2,8)|\cdot(8-1)\cdot3)=2^{14}\cdot3\cdot5\cdot31,$$
which is not a perfect square. 


The cases $q$ = 3 and 4 can all be excluded by the same calculation.

\medskip
\noindent
{\bf Case 3:} If $(i,\theta)=(1,3)$, then
 $n=3$ and $H_0\cong \mkern-1mu \raisebox{0.5ex}{$\hat{\ }$} \mkern-3mu(q^2+q+1):3.$
Thus, $|H_0|=\frac{3(q^2+q+1)}{(3,q-1)}$, and we have that
	$$v=\frac{q^3(q^2-1)(q^3-1)}{(3,q-1)}\cdot\frac{(3,q-1)}{3(q^2+q+1)}=\frac{q^3(q-1)^2(q+1)}{3}.$$
	Note that $|\Out(X)||H_0| = 2f(3,q-1)\cdot\frac{3(q^2+q+1)}{(3,q-1)}=6f(q^2+q+1).$ Thus, $$\frac{q^3(q-1)^2(q+1)}{3}=v=k^2\leq(6f(q^2+q+1)-1)^2<36f^2(q^2+q+1)^2$$ by Lemma \ref{GuanL2}, and hence $$q^{3}(q-1)^2(q+1)\leq108f^2(q^{2}+q+1)^2.$$
	This inequality holds when 
	\begin{equation}
		\label{e}
q\in\{2,3,4,5,6,7,8,9,11,16,27,32\}.
	\end{equation}
	Then, for each $q$, the possible values of $v$ are listed in Table \ref{Table:e}.
	\begin{table}[h]
	\centering
	\caption{Possible values for $v$ with $q=p^f$ \label{Table:e}}
	\begin{tabular}{llll}
		\toprule
		$q$   & $v$	&$q$   & $v$ \\
		\midrule
		$2$&$2^3$&$3$&$2^4\cdot3^2$\\
		$4$&$2^6\cdot3\cdot5$&$5$&$2^5\cdot3^5$\\
		$7$&$2^5\cdot3\cdot7^3$&$8$&$2^9\cdot3\cdot7^2$\\
		$9$&$2^7\cdot3^5\cdot5$&$11$&$2^4\cdot5^2\cdot11^3$\\
		$16$&$2^{12}\cdot3\cdot5^2\cdot17$&$27$&$2^{2}\cdot3^{12}\cdot7\cdot13^2$\\
		$32$&$2^{15}\cdot11\cdot31^2$\\
		\bottomrule
	\end{tabular}
	\end{table}
	
	As shown in Table \ref{Table:e}, $v$ is a perfect square if and only if $q = 3$. For this case, we have $k=12$ and $v=144$.

	Now we consider $q=3$, then $X=PSL(3,3)$, and $H_0\cong \mkern-1mu \raisebox{0.5ex}{$\hat{\ }$} \mkern-3mu (q^2+q+1):3$ with $q=3.$ Thus $G=PSL(3,3)$ or $PSL(3,3):2$, and the order of $G$ is 5616 or 11232 respectively. If $t\geq3$, then
	$$b=\lambda_3\frac{v(v-1)(v-2)}{k(k-1)(k-2)}\cdot=\lambda_3\cdot12\cdot13\cdot71/5,$$which get a contradiction for $b\mid|G|$. Thus $t=2.$
	
	\medskip
	\noindent
{\bf Case 4:} If $(i,\theta)=(1,4)$, then
$n=4$ and $|H_0|=\frac{4(q^3+q^2+q+1)}{(4,q-1)}$, and we have that $$v=\frac{(q^4-1)(q^4-q)(q^4-q^2)(q^4-q^3)}{(4,q-1)}\cdot\frac{(4,q-1)}{3(q^3+q^2+q+1)}=\frac{q^6(q-1)^2(q^2-1)(q^3-1)}{3}$$
and
$$|\Out(X)||H_0|=2f\cdot(4,q)\cdot\frac{4(q^3+q^2+q+1)}{(4,q-1)}=8f(q^3+q^2+q+1).$$
 By Lemma \ref{GuanL1}, we get that $$\frac{q^6(q-1)^2(q^2-1)(q^3-1)}{3}<64f^2(q^3+q^2+q+1)^2,$$
it follows that $q=2$ or $3$, and hence $v=2^6\cdot7$ or $2^6\cdot3^5\cdot13,$ a contradiction. 
\end{proof}

\begin{lemma}\rm\label{C4}
	The subgroup $H$ cannot be a $\mathcal{C}_4$-subgroup.
\end{lemma}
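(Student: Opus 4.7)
The plan is to argue much as in Lemmas \ref{C1}--\ref{C3}: since a $\mathcal{C}_4$-subgroup is non-parabolic, I would invoke the inequality of Lemma \ref{GuanC2} to cut the possibilities for $(n_1,n_2,q)$ down to a small finite list, and then dispose of each remaining case by a direct check that $v=|X|/|H_0|$ is not a perfect square.

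Concretely, by \cite[Proposition 4.4.10]{MR1057341} I may assume that $H$ stabilizes a tensor decomposition $V=V_1\otimes V_2$ with $\dim V_i=n_i$, $n=n_1n_2$, and $2\leq n_1<n_2$, so that up to a bounded central factor
$$|H_0|\;=\;\frac{|\mkern-1mu \raisebox{0.5ex}{$\hat{\ }$} \mkern-3muSL(n_1,q)|\cdot|\mkern-1mu \raisebox{0.5ex}{$\hat{\ }$} \mkern-3muSL(n_2,q)|\cdot(q-1)}{(n,q-1)}.$$
Using the lower bound $|X|\geq q^{n^2-2}$ from Lemma \ref{bound}, together with the crude estimate $|H_0|\leq q^{n_1^2+n_2^2}$ and the $p$-part bound $|H_0|_p\geq q^{n_1(n_1-1)/2+n_2(n_2-1)/2}$ (which uses $\gcd(p,q-1)=1$), the inequality $|X|<|\Out(X)|^2|H_0||H_0|_{p'}^2$ of Lemma \ref{GuanC2} combined with (\ref{e2}) rearranges into an estimate of the form
$$q^{\,n_1^2n_2^2\,-\,2n_1^2\,-\,2n_2^2\,-\,n_1\,-\,n_2\,+\,c}\;<\;C\,f^2\,n^2$$
for absolute constants $c,C$, whose left-hand exponent grows rapidly in $(n_1,n_2)$.

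A short check then shows that all pairs $(n_1,n_2)\neq(2,3)$ are already incompatible with this bound regardless of $q$, leaving only $(n_1,n_2)=(2,3)$ (so $n=6$) with $q$ restricted to a short finite list. For each such surviving triple $(6,q)$ I would then compute $v$ explicitly via (\ref{e1}) and verify by prime factorization that $v$ is not a perfect square, giving the desired contradiction with $v=k^2$.

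The hard part will be the bookkeeping in the reduction step: one has to handle the precise central structure from \cite[Proposition 4.4.10]{MR1057341} and the exact $p$-parts of the orders involved carefully enough that the exponent inequality really isolates $(n_1,n_2)=(2,3)$. Once that is achieved, the square-check for the handful of surviving $q$ is entirely routine and follows the pattern established in the earlier cases.
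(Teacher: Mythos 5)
Your proposal is correct and follows essentially the same route as the paper: both apply the non-parabolic inequality (\ref{o}) of Lemma \ref{GuanC2} together with the order bounds of Lemma \ref{bound} to the tensor-product stabilizer, and your exponent estimate does isolate $(n_1,n_2)=(2,3)$ with only a handful of $q$, all of which indeed fail the square test ($v$ is never a perfect square for $q\in\{2,3,4,5,8\}$). The only difference is one of sharpness: the paper bounds $|H_0|$ and its $p'$-part a bit more carefully (via the monotone functions $\Phi_1,\Phi_2$), getting $q^{n^2-2}<q^{n^2/2+n/2+12}$ and hence $n^2-n-28\le 0$, which contradicts $n\ge 6$ outright and so dispenses with any residual case-by-case check.
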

\begin{proof}
Let $H$ be a $\mathcal{C}_4$-subgroup. Then $H$ is the stabilizer of a tensor product of two non-singular subspaces of dimensions $1<i<n/i$, which implies $i<\sqrt{n}.$ As $n/i$ is a positive integer, so $n\geq6$. By \cite[Proposition 4.4.10]{MR1057341}, we have that 
$$|H_0|=\frac{(i,n/i,q-1)}{(n,q-1)}\cdot|PGL(i,q)||PGL(n/i,q)|.$$

\noindent
According to Lemma \ref{bound}, we get the fact 
$$|H_0|\leq|PGL(i,q)||PGL(n/i,q)|<\frac{(1-q^{-1})q^{i^2}}{q-1}\cdot\frac{(1-q^{-1})q^{n^2/i^2}}{q-1}=q^{i^2+(n^2/i^2)-2}.$$
Here we fix $n$, and let $\Phi_1(i)=i^2+(n^2/i^2)-2=(i+n/i)^2-2n-2.$ Then $\Phi_1(i)$ is a decreasing function on the interval (2,$\sqrt{n}$), and hence $\Phi_1(i)<\Phi_1(2)=n^2/4+2.$ Thus $|H_0|<q^{n^2/4+2}$.
It follows that $$q^{n^2-2}<4f^2\cdot q^{n^2/4+4}\cdot\prod \limits_{j=1}^{i}(q^j - 1)^2\cdot\prod \limits_{j=1}^{n/i}(q^j - 1)^2$$ by (\ref{o}). Note the fact that $\prod \limits_{j=1}^{i}(q^j - 1)\cdot\prod \limits_{j=1}^{n/i}(q^j - 1)<q^{\frac{i^2+i+(n/i)^2+(n/i)}{2}}$. Suppose that 
$$\Phi_2(i)=i^2+i+(n/i)^2+(n/i)=(i+n/i)^2+(i+n/i)-2n.$$
Then $\Phi_2(i)$ is a decreasing function of $i$ on the interval $(2,\sqrt{n})$, and hence $$\Phi_2(i)\leq\Phi_2(2)=\frac{n^2}{4}+\frac{n}{2}+6.$$
Thus we have that $$q^{n^2-2}<4f^2\cdot q^{\frac{n^2}{2}+\frac{n}{2}+10}\leq q^{\frac{n^2}{2}+\frac{n}{2}+12},$$ which implies that $n^2-n-28\leq0,$ and so $i=2,n=4$ or $5$, a contradiction. 
\end{proof} 

\begin{lemma}\rm\label{C5}
	The subgroup $H$ cannot be a  $\mathcal{C}_5$-subgroup.
\end{lemma}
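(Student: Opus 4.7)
Suppose $H$ is a $\mathcal{C}_5$-subgroup. By \cite[Proposition 4.5.3]{MR1057341} we have $q=q_0^r$ for some prime $r$, and $H_0$ is (up to small factors) the normalizer of a subfield subgroup of $X$, with
\[
|H_0|=\frac{r(q-1)}{(q_0-1)(n,q_0-1)}\cdot|PSL(n,q_0)|.
\]
The plan is to mirror the treatment of Lemmas \ref{C3} and \ref{C4}: first combine the order inequality of Lemma \ref{GuanC2} with the bounds of Lemma \ref{bound} to reduce $(n,q,r)$ to a short explicit list, and then discard each survivor by checking either that $v=|X|/|H_0|$ is not a perfect square or that a subdegree divisibility coming from Lemma \ref{GuanL1} or Corollary \ref{GuanC1} fails.

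First I would observe that $H$ is not a parabolic subgroup, so Lemma \ref{GuanC2} applies and gives $|X|<|\Out(X)|^2|H_0||H_0|_{p'}^2$. Using the lower bound $|X|>q^{n^2-2}$, the estimate $|\Out(X)|=2f(n,q-1)\leq 2fn$, the upper bound $|H_0|<r^2\,q_0^{n^2+r-2}$ extracted from Lemma \ref{bound}, and $|H_0|_{p'}\leq|H_0|/q_0^{n(n-1)/2}$ (since $H_0$ contains a Sylow $p$-subgroup of $PSL(n,q_0)$), the inequality reduces to a polynomial constraint of the shape
\[
q^{n^2-2}<C(n,f,r)\cdot q_0^{2n^2+n+cr}
\]
for an explicit small constant $c$. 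Substituting $q_0=q^{1/r}$, for $r\geq 3$ and $n\geq 3$ the right-hand exponent becomes strictly less than $n^2-2$, forcing $(n,q)$ into a finite explicit set. Each such case I would then clear by computing $v$ from (\ref{e1}) and either observing that $v$ is not a perfect square or that $k+1$ fails the divisibility $k+1\mid (v-1,|\Out(X)||H_0|)$ given by Corollary \ref{GuanC1}; this mirrors the end-game of Lemma \ref{C3}.

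The delicate case is $r=2$: here $|H_0|$ is of order roughly $\sqrt{|X|}$, so the pure order inequality above becomes essentially vacuous. To handle this, my plan is to bring in an explicit subdegree $d$ for the action of $X$ on $[X:H_0]$ arising from the double-coset decomposition relative to a subfield subgroup, as recorded in \cite[pp.~340--341]{MR1940339}, and then invoke $k+1\mid d$ from Lemma \ref{GuanL1}. Substituting into $v=k^2$ produces a refined inequality in $q_0$ and $n$ in which the dominant terms now match in degree, so the residual constraints force $(n,q_0)$ to lie in a finite list; those remaining configurations are discarded by the same squareness check on $v$ used throughout the section. I expect this $r=2$ subcase to be the main technical obstacle, since it is the only one in which the general order-inequality technology of Lemma \ref{GuanC2} is insufficient on its own and genuine subdegree input is required.
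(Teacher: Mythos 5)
Your outline follows the paper's own route: for subfield degree $r\geq 3$ the paper combines the order inequality of Lemma \ref{GuanC2} with the bounds of Lemma \ref{bound} to force $n\in\{3,4\}$ and then finitely many $q$, while for $r=2$ it invokes the Saxl subdegree $(q_0^n-1)(q_0^{n-1}-1)$ together with Lemma \ref{GuanL1} and Corollary \ref{GuanC1} to get $n\in\{3,4\}$, finishing with perfect-square checks on $v$ — exactly the case split and tools you propose, including the observation that the pure order bound is useless when $r=2$. Note, however, that your argument is only a plan: the exponent comparison you assert for $r\geq3$ actually fails for small cases such as $(n,r)=(3,3)$ (which is precisely why the paper must enumerate triples $(p,u,a)$ explicitly), your formula for $|H_0|$ carries a spurious factor $r$, and the $r=2$, $n=3,4$ eliminations require the detailed gcd manipulations, subcase analysis on $(3,q_0+1)$ and $(4,q_0-1)$, and tabulated squareness checks that constitute the bulk of the paper's proof.
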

\begin{proof}
 Let $H$ be a $\mathcal{C}_5$-subgroup. Then $H$ is the stabilizer in $G$ of a subfield space and by \cite[Proposition 4.5.3 ]{MR1057341}, we get $$|H_0|=|\mkern-3mu \raisebox{0.5ex}{$\hat{\ }$} \mkern-1mu PGL(n,q_0)|\cdot(n,(q-1)/(q_0-1))$$  with $q=q_0^{u}$, $u$ prime. If $u\geq3$
, by Lemma \ref{bound} and (\ref{o}), then we get that $$q_0^{u(n^2-2)}<4f^2\cdot q_0^{2n^2+n-3}\cdot(n,q_0^u-1)^3.$$
Since $4f^2\leq q_0^{2u}$ and $n^3\leq q_0^{2n}$, we have that $u\cdot(n^2-4)<2n^2+3n-3$, according to the fact $n\geq3$, we conclude that 3$\cdot$$(n^2-4)<2n^2+3n-3,$ which implies that $n^2-3n-15<0.$ Thus $n=3$ or $4$.

If $n=3$, we have that $q_0^{7u}<108f^2\cdot q_0^{18}.$
Let $q_0=p^a$, then $f=au$.
Thus we have that $$p^{7au}<108(au)^2\cdot p^{18a}.$$
According to the inequality, we have  $(p,u,a)=(2,3,1),(2,3,2),(2,3,3),(2,3,4),(2,4,1),$
\noindent
(3,3,1), (3,3,2), (5,3,1) and (7,3,1). However, for each case, we have no suitable parameters as $v=k^2.$

If $n=4$, we have that $q_0^{14u}<108f^2\cdot q_0^{33}.$
Let $q_0=p^a$, then $f=au$.
Thus we have that $$p^{14au}<108(au)^2\cdot p^{33a},$$
which is true only when $(p,u,a)=(2,3,1)$. However, $v$ is not a perfect square for this case.

Now we consider $u=2$,  we have that $v>q_0^{n^2-3}$ by (\ref{e1}) and Lemma \ref{bound}. Moreover, we know that there is a subdegree $(q_0^n-1)(q_0^{n-1}-1)$ from  \cite[p.343]{MR1940339}. Then by Lemma \ref{GuanL1}, we get that $$q_0^{n^2-3}<(q_0^n-1)^2(q_0^{n-1}-1)^2<q_0^{4n-2}.$$
It follows that $n^2-4n-1<0$, which forces $n=3,4.$

If $n=3$, then $v=q_0^3(q_0^2+1)(q_0^3+1)/\zeta_1$, where $\zeta_1=(3,q_0+1)$.
Then $\zeta_1=1$ or $3$.

\noindent
{\bf Subcase 1:} Let $\zeta_1=1$, then $$v=q_0^3(q_0^2+1)(q_0^3+1)$$ and $$|\Out(X)||H_0|=2f\cdot q_0^3(q_0^2-1)(q_0^3-1),$$
thus, $$(v-1,|\Out(X)||H_0|)=(v-1,2f(q_0^2-1)(q_0^3-1)).$$
As $$q_0^3(q_0^2+1)(q_0^3+1)-1=(q_0^2-1)(q_0^3-1)(q_0^3+2q_0+2)+\phi_1(q_0)$$
and
$$2(q_0^2-1)(q_0^3-1)=\phi_1(q_0)(q_0-2)+\varphi_1(q_0),$$
where $\phi_1(q_0)=2q_0^4+4q_0^3+2q_0^2-2q_0-3$, $\varphi_1(q_0)=4q_0^3+4q_0^2-q_0-4,$ thus we have $$k+1\mid 2f(\phi_1(q_0),\varphi_1(q_0)),$$
by Corollary \ref{GuanC1},
it follows that 
$$q_0^8<k^2=q_0^3(q_0^2+1)(q_0^3+1)\leq(2f\varphi_1(q_0)-1)^2<64f^2q_0^4(q_0+1)^2.$$
Here $f=2a$, thus, 
\begin{equation}
	\label{lz}
	p^{4a}<256a^2(p^a+1)^2.
\end{equation}
According to (\ref{lz}) and the fact $\zeta_1=1$, 
all possible values of 
$q_0$ and $v$  are listed in Table \ref{Table:312.1}. 
\begin{table}[h]
	\centering
	\caption{Possible values for $v$ with $q_0 = p^a$ \label{Table:312.1}}
	\begin{tabular}{llll}
		\toprule
		$q_0$   & $v$ 	&$q_0$   & $v$\\
		\midrule
		$3$&$2^3\cdot3^3\cdot5\cdot7$&$4$&$2^6\cdot5\cdot13\cdot17$\\
		$7$&$2^4\cdot5^2\cdot7^3\cdot43$&$9$&$2^2\cdot3^6\cdot5\cdot41\cdot73$\\
		$13$&$2^2\cdot5\cdot7\cdot13^3\cdot17\cdot157$&$16$&$2^{12}\cdot17\cdot241\cdot257$\\
		$25$&$2^2\cdot5^6\cdot13\cdot313\cdot601$&$27$&$2^3\cdot3^9\cdot5\cdot7\cdot19\cdot37\cdot73$\\
		$64$&$2^{18}\cdot5\cdot13\cdot17\cdot37\cdot109$\\
		\bottomrule
	\end{tabular}
\end{table}

\noindent
However, $v$ is not a perfect square for each case.

\noindent
{\bf Subcase 2:} Suppose that $\zeta_1=3$, then $v=q_0^3(q_0^2+1)(q_0^3+1)/3$
and
$$|\Out(X)||H_0|=6f\cdot q_0^3(q_0^2-1)(q_0^3-1).$$
Here
\begin{align*}
	(v-1,|\Out(X)||H_0|)
	&=\frac{1}{3}(q_0^3(q_0^2+1)(q_0^3+1)-3,18fq_0^3(q_0^2-1)(q_0^3-1))\\
	&=\frac{1}{3}(q_0^3(q_0^2+1)(q_0^3+1)-3,18f(q_0^2-1)(q_0^3-1)).
\end{align*}

\noindent
Since $$q_0^3(q_0^2+1)(q_0^3+1)-3=(q_0^2-1)(q_0^3-1)\cdot(q_0^3+2q_0+2)+\phi_2(q_0)$$
and
$$2(q_0^2-1)(q_0^3-1)=\phi_2(q_0)\cdot(q_0-2)+\varphi_2(q_0),$$
where $\phi_2(q_0)=q_0^4+4q_0^3+2q_0^2-2q_0-5$, $\varphi_2(q_0)=4q_0^3+4q_0^2-3q_0-8.$
According to Corollary \ref{GuanC1}, $$k+1\mid6f(\phi_2(q_0),\varphi_2(q_0)),$$
and hence $$k^2=q_0^3(q_0^2+1)(q_0^3+1)/3\leq(6f\varphi_2(q_0)-1)^2<576f^2\cdot q_0^4(q_0^2+1)^2.$$
It follows that $$(q_0^2+1)(q_0^3+1)<1728f^2q_0(q_0+1)^2,$$
thus
\begin{equation}
	\label{lx}
	(p^{2a}+1)(p^{3a}+1)<6912a^2p^a(p^a+1)^2.
\end{equation}

According to (\ref{lx}) and the fact $\zeta_1=3,$ all possible values of $q_0$ and $v$  can be obtained by a little calculation, which listed in Table \ref{Table:312.2}. However, $v$ is not a perfect square for each case, a contradiction.
\begin{table}[h]
	\centering
	\caption{Possible values for $v$  with $q_0=p^a$ \label{Table:312.2}}
	\begin{tabular}{llll}
		\toprule
		$q_0$&$v$&$q_0$&$v$\\
		\midrule
		$2$&$2^3\cdot3\cdot5$&
		$5$&$2^2\cdot3\cdot5^3\cdot7\cdot13$\\
		$8$&$2^9\cdot3^2\cdot5\cdot13\cdot19$&
		$11$&$2^3\cdot3\cdot11^3\cdot37\cdot61$\\
		$17$&$2^2\cdot3^2\cdot5\cdot7\cdot13\cdot17^3\cdot29$&
		$23$&$2^4\cdot3\cdot5\cdot13^2\cdot23^3\cdot53$\\
		$29$&$2^2\cdot3\cdot5\cdot29^3\cdot271\cdot421$&$32$&$2^{15}\cdot3\cdot5^2\cdot11\cdot41\cdot331$\\
		$41$&$2^2\cdot3\cdot7\cdot29^2\cdot41^3\cdot547$&
		$47$&$2^5\cdot3\cdot5\cdot7\cdot13\cdot17\cdot47^3\cdot103$\\
		$53$&$2^2\cdot3^3\cdot5\cdot53^3\cdot281\cdot919$&
		$59$&$2^3\cdot3\cdot5\cdot7\cdot59^3\cdot163\cdot1741$\\
		$71$&$2^4\cdot3^2\cdot71^3\cdot1657\cdot2521$&
		$83$&$2^3\cdot3\cdot5\cdot7\cdot13\cdot53\cdot83^3\cdot2269$\\
		$125$&$2^2\cdot3^2\cdot5^9\cdot7\cdot13\cdot601\cdot5167$&
		$128$&$2^{21}\cdot3\cdot5\cdot29\cdot43\cdot113\cdot5419$\\
		\bottomrule
	\end{tabular}
\end{table}

If $n=4$, then $v=q_0^6(q_0^4+1)(q_0^3+1)(q_0^2+1)/\zeta_2$, where $\zeta_2=(4,q_0+1)$. 
Thus, $\zeta_2=1$, $2$ or $4$.

\medskip
\noindent
{\bf Subcase 1:} If $\zeta_2=1$, then $v=q_0^6(q_0^4+1)(q_0^3+1)(q_0^2+1)$
and $$|\Out(X)||H_0|=2f\cdot q_0^6\cdot(q_0^4-1)(q_0^3-1)(q_0^2-1).$$
Thus we have that $$(v-1,|\Out(X)||H_0|)=(v-1,2f\cdot(q_0^3-1)(q_0^2-1)^2).$$
It follow that $$q_0^6(q_0^4+1)(q_0^3+1)(q_0^2+1)<4f^2\cdot(q_0^3-1)^2(q_0^2-1)^4.$$
Here $f=2a,$ then
$$p^{6a}(p^{4a}+1)(p^{3a}+1)(p^{2a}+1)<16a^2(p^{3a}-1)^2(p^{2a}-1)^4.$$
Hence,	$p=2,a\leq10$. Using the same approach as before, direct computation shows that $v$ is not a perfect square in any case.

\medskip
\noindent
{\bf Subcase 2:} If $\zeta_2=2$, then $v=q_0^6(q_0^4+1)(q_0^3+1)(q_0^2+1)/2$
and $$|\Out(X)||H_0|=4f\cdot q_0^6\cdot(q_0^4-1)(q_0^3-1)(q_0^2-1).$$
In this case $q_0$ is odd. Note that
$$\left(2(v-1),q_0^6\cdot(q_0^4-1)(q_0^3-1)(q_0^2-1)\right)\mid2\cdot\left(2(v-1),(q_0^3-1)(q_0^2-1)^2\right)$$
by the facts $(2(v-1),q_0)=1$ and $(2(v-1),q_0^2+1)=2$.
Thus we have that $$k+1\mid8f(q_0^3-1)(q_0^2-1)^2.$$
It follow that $$q_0^6(q_0^4+1)(q_0^3+1)(q_0^2+1)/2<64f^2\cdot(q_0^3-1)^2(q_0^2-1)^4.$$
Here $f=2a,$ then
$$p^{6a}(p^{4a}+1)(p^{3a}+1)(p^{2a}+1)<128a^2(p^{3a}-1)^2(p^{2a}-1)^4.$$
According to the same calculation as before, we can exclude this subcase.

\medskip
\noindent
{\bf Subcase 3:} If $\zeta_2=4$, then $v=q_0^6(q_0^4+1)(q_0^3+1)(q_0^2+1)/4$
and $$|\Out(X)||H_0|=8f\cdot q_0^6\cdot(q_0^4-1)(q_0^3-1)(q_0^2-1).$$
Since $(4(v-1),q_0)=1$ and $(4(v-1),q_0^2+1)=2$,
thus, $$k+1\mid16f(q_0^3-1)(q_0^2-1)^2.$$

\noindent
It follow that $$q_0^6(q_0^4+1)(q_0^3+1)(q_0^2+1)/4<256f^2\cdot(q_0^3-1)^2(q_0^2-1)^4.$$

\noindent
Note that $f=2a,$ then
$$p^{6a}(p^{4a}+1)(p^{3a}+1)(p^{2a}+1)<1024a^2(p^{3a}-1)^2(p^{2a}-1)^4.$$
A brief calculation shows that no solutions exist for parameter pairs $(p,a)$ satisfying the condition $v=k^2.$
\end{proof}

\begin{lemma}\rm\label{C6}
	The subgroup $H$ cannot be  $\mathcal{C}_6$-subgroup.
\end{lemma}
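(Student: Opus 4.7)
The plan is to rule out $\mathcal{C}_6$ using a size-comparison argument combined with the fact that $v$ must be a perfect square. By \cite[Proposition 4.6.6]{MR1057341}, a $\mathcal{C}_6$-subgroup of $X=PSL(n,q)$ exists only when $n=r^m$ for some prime $r$, with $q=p$ prime (so $f=1$), $r\neq p$, and $r\mid q-1$ when $r$ is odd (with an analogous congruence for $r=2$). Under these hypotheses $H_0$ has shape $\mkern-1mu \raisebox{0.5ex}{$\hat{\ }$} \mkern-3mu r^{2m}{:}Sp(2m,r)$ modulo central scalars, so in particular $|H_0|$ is a $p'$-number and satisfies
\[
|H_0|\le r^{2m}\cdot|Sp(2m,r)|<r^{2m+m(2m+1)}=r^{2m^2+3m}.
\]

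First I would apply the inequality (\ref{o}) from Lemma \ref{GuanC2} (applicable because $\mathcal{C}_6$-subgroups are non-parabolic). Since $|H_0|_{p'}=|H_0|$ and $|\Out(X)|=2(n,q-1)\le 2n$, this yields
\[
|X|<|\Out(X)|^2|H_0|^3\le 4n^2\,|H_0|^3.
\]
Combining with the lower bound $|X|>q^{n^2-2}$ from Lemma \ref{bound}, and using $n=r^m$, this gives
\[
q^{n^2-2}<4n^2\cdot r^{6m^2+9m}.
\]
Taking logarithms in base $r$ and using $r\le q$, this inequality forces $n^2$ to be small: a direct estimate bounds the admissible pairs $(n,q)=(r^m,p)$ to a short explicit list, essentially $n\in\{3,4,5,7,8,9\}$ together with the smallest allowed primes $p$ satisfying the congruence conditions ($p\equiv 1\pmod r$ for odd $r$, and suitable congruences modulo powers of $2$ when $r=2$).

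The final step is to examine each surviving pair $(n,p,r)$ individually and compute
\[
v=\frac{|X|}{|H_0|}
\]
explicitly from (\ref{e1}), then check that $v$ is not a perfect square, and thus cannot equal $k^2$. In the few cases where the direct size-square test is not immediately decisive, I would instead use Corollary \ref{GuanC1}: the divisibility $k+1\mid(v-1,|\Out(X)||H_0|)$ together with the upper bound on $|H_0|$ forces $k+1$ to be much smaller than $\sqrt{v}$, again contradicting $v=k^2$. The main obstacle is not the conceptual argument, which is routine, but the bookkeeping for the $r=2$ case, where the congruence conditions on $q$ are more delicate and the structure of $H_0$ involves an extraspecial $2$-group of $+$ or $-$ type depending on $q\pmod 8$; each type must be treated separately when enumerating the short list of surviving $(n,q)$, but in every case the explicit value of $v$ fails to be a square.
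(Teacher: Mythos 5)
Your overall strategy is exactly the paper's: bound $|H_0|\le r^{2m}|Sp(2m,r)|<r^{2m^2+3m}$, feed this into inequality (\ref{o}) together with the lower bound $|X|>q^{n^2-2}$ of Lemma \ref{bound} to force $n=r^m$ into a short list, and then eliminate the survivors by computing $v$ from (\ref{e1}) and using Corollary \ref{GuanC1}/Lemma \ref{GuanL2} to contradict $v=k^2$. Two of your supporting claims, however, are not accurate as stated. First, the hypothesis ``$q=p$ prime, so $f=1$'' is not part of the $\mathcal{C}_6$ conditions: the correct requirement (and the one the paper uses) is that $f$ is minimal with $r\cdot(2,r)\mid q-1=p^f-1$, which allows $f>1$; for instance $X=PSL(3,4)$ has a maximal $\mathcal{C}_6$-subgroup $3^2{:}Q_8$ even though $q=4$ is not prime. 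If you enumerate only prime $q$ satisfying your congruences, your case check literally omits such groups (here $v=280$, harmlessly not a square, but the omission is a gap in the argument as written). The fix is cheap: keep $f$ in the bound via $|\Out(X)|=2f(n,q-1)$ and absorb it with $4f^2\le q^2$, exactly as the paper does, and use $r<q$ (from $r\cdot(2,r)\mid q-1$) instead of $f=1$ to compare $r$-powers with $q$-powers.

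Second, your claim that $|H_0|$ is a $p'$-number is false in general, since $p$ may divide $|Sp(2m,r)|$ (e.g. $p=5$, $Sp_4(2)\cong S_6$). This slip is harmless: since $|H_0|_{p'}\le|H_0|$, inequality (\ref{o}) still gives $|X|<|\Out(X)|^2|H_0|^3$, which is all you use. With these two repairs (general $f$, and the trivial bound $|H_0|_{p'}\le |H_0|$ stated correctly), your argument goes through and coincides with the paper's proof, including the final square/divisibility checks for the small cases $n\in\{3,4,5,8\}$.
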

\begin{proof}
 Let $H$ be a $\mathcal{C}_6$-subgroup. 
 Here $H$ is of type $\omega^{2i}\cdot Sp(2i,\omega)$, where $n=\omega^i\geq3$ for some prime $\omega\neq p$ and positive integer $i$, and moreover $f$ is odd and is minimal such that $\omega\cdot(2,\omega)$ divides $q-1=p^f-1$(see \cite[Table 3.5A]{MR1057341}). By \cite[Propositions 4.6.5 and 4.6.6]{MR1057341} and Lemma \ref{bound}, we have 
 $$|H_0|\leq \omega^{2i}\cdot|Sp(2i,\omega)|<\omega^{2i^2+3i}.$$
 Moreover, $\omega<q$ as $\omega\cdot(2,\omega)\mid q-1$. Hence $H_0<q^{2i^2+3i}$, and so by Lemmas \ref{bound} and (\ref{o}) and this fact $|\Out(X)|<2fq$, we get
 \begin{equation}
 	\label{101}
 	 q^{n^2-2}<4f^2\cdot q^{2i^2+9i+4}\cdot\prod \limits_{j=1}^{i}(\omega^{2j} - 1)^2\leq q^{2i^2+9i+6}\cdot\prod \limits_{j=1}^{i}(\omega^{2j} - 1)^2.
 \end{equation}

\noindent
 This together with the fact that $$\prod \limits_{j=1}^{i}(\omega^{2j} - 1)^2<\omega^{2i(i+1)-2}<q^{2i(i+1)-2},$$
 which implies that $\omega^{2i}<4i^2+11i+6.$ A little calculation shows that the last inequality holds only for $(i,\omega)\in\{(1,3),(1,5),(2,2),(3,2)\}.$ 
 
 If $(i,\omega)=(3,2)$, then $n=8$. In this case by (\ref{101}) we have that
 $$q^{13}<4f^2\cdot(2^6-1)^2\cdot(2^4-1)^2\cdot(2^2-1)^2,$$
 which is impossible as in the case $q\geq5$.
 Similarly, $(i,\omega)=(1,5)$ can be ruled out.
 
 If $(i,\omega)=(1,3)$, then $n=3$. In this case, $H_0\cong3^2:Q_8$ by \cite[p.7]{MR3752029}, and hence $|H_0|= 72$,$$v= \frac{q^3(q^2-1)(q^3-1)}{72\cdot(3,q-1)}.$$ From $k+1\mid432f$ by Lemma \ref{GuanL2}, we get that 
 $$\frac{q^3(q^2-1)(q^3-1)}{216}<k^2=\frac{q^3(q^2-1)(q^3-1)}{72\cdot(3,q-1)}\leq(432f-1)^2<432^2\cdot f^2,$$ it follows that $q\in\{3,5,7,9\}.$ However, in each case, $v$ is not a perfect square.
 
  If $(i,\omega)=(1,4)$, then $n=4$. In this case, $H_0\cong2^4:A_6$ by \cite[p.7]{MR3752029}, and hence $|H_0|=360$, thus, we have that $$v=\frac{q^3(q^4-1)(q^4-q)(q^4-q^2)}{1440\cdot(4,q-1)}.$$
  
  \noindent
  From $k+1\mid2880f$ by Lemma \ref{GuanL2}, we get that 
  $$\frac{q^3(q^4-1)(q^4-q)(q^4-q^2)}{5760}<\frac{q^3(q^4-1)(q^4-q)(q^4-q^2)}{1440\cdot(4,q-1)}<2880^2f^2,$$
  which is impossible as in this case $q\geq17.$
\end{proof}
\begin{lemma}\rm\label{C7}
	The subgroup $H$  cannot be a $\mathcal{C}_7$-subgroup.
\end{lemma}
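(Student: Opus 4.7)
The plan is to follow the pattern of Lemma~\ref{C4}: produce an explicit upper bound on $|H_0|$ and on its $p'$-part, and then use inequality~(\ref{o}) of Lemma~\ref{GuanC2} to force an impossible size constraint. Since $H$ is a $\mathcal{C}_7$-subgroup of $G$, the subgroup $H$ stabilizes a tensor decomposition $V=\bigotimes_{j=1}^{i}V_j$ with $\dim V_j=\ell$, where $n=\ell^i$, $\ell\geq 3$ and $i\geq 2$; by \cite[Proposition~4.7.3]{MR1057341} the corresponding subgroup of $GL(n,q)$ lies in $GL(\ell,q)\wr S_i$, so in particular $|H_0|\leq|GL(\ell,q)|^i\cdot i!$.

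Next I would estimate both $|H_0|$ and its $p'$-part using Lemma~\ref{bound}: from $|GL(\ell,q)|<q^{\ell^2}$ and $|GL(\ell,q)|_{p'}=\prod_{j=1}^{\ell}(q^j-1)<q^{\ell(\ell+1)/2}$ we obtain
$$|H_0|<q^{i\ell^2}\cdot i!\qquad\text{and}\qquad|H_0|_{p'}^{2}<q^{i\ell(\ell+1)}\cdot(i!)^{2}.$$
Substituting these, together with $|\Out(X)|^2\leq 4f^2n^2\leq q^{2}\ell^{2i}$ and $|X|>q^{n^2-2}=q^{\ell^{2i}-2}$, into inequality~(\ref{o}) yields
$$q^{\ell^{2i}-4-i(2\ell^2+\ell)}<\ell^{2i}\,(i!)^{3}.$$

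Finally I would verify that this inequality has no admissible solution. In the smallest case $(\ell,i)=(3,2)$ the left-hand exponent equals $81-4-42=35$, while the right-hand side equals $81\cdot 8=648$; since $2^{35}\gg 648$, this is already impossible for every $q\geq 2$. For every larger pair $(\ell,i)$ the left-hand exponent grows roughly like $\ell^{2i}$, whereas the right-hand side is only polynomial in $\ell$ and $i$, so the inequality fails even more strongly. The main obstacle I anticipate is simply the bookkeeping of scalar contributions when passing between $GL(n,q)$, $SL(n,q)$ and $PSL(n,q)$; these cost at most a factor $(n,q-1)\leq n$, which is easily absorbed by the exponential gap that is already overwhelming at the smallest admissible case.
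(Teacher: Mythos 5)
Your proposal is correct and follows essentially the same route as the paper: identify the $\mathcal{C}_7$-type as a wreath product $GL(\ell,q)\wr S_i$ with $n=\ell^i$, bound $|H_0|$ and its $p'$-part via Lemma \ref{bound}, feed these into inequality (\ref{o}) together with $4f^2\le q^2$, and kill all pairs by comparing a doubly exponential left-hand exponent with the small right-hand side, the smallest case $(\ell,i)=(3,2)$ already failing. The only quibble is your phrase that the right-hand side is ``polynomial in $\ell$ and $i$'' (the factor $(i!)^3$ is not), but since the left-hand side is $q$ raised to an exponent growing like $\ell^{2i}\ge 9^i$ while $\log_2\bigl(\ell^{2i}(i!)^3\bigr)$ grows only like $i\log i+i\log\ell$, the domination argument goes through and the conclusion stands.
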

\begin{proof}
Let $H$ be a $\mathcal{C}_7$-subgroup. Then $H$ is a tensor product subgroup of type $GL(i,q)\wr S_\ell$, where $\ell\geq2,i\geq3$ and $n=i^\ell$ (see \cite[Table 3.5]{MR1057341}). Here by \cite[Proposition 4.7.3]{MR1057341}, we have that $|H_0|\leq|PGL(i,q)|^\ell\cdot\ell!/(n,q-1)$. This together with Lemma \ref{bound} implies that $|H_0|<q^{\ell(i^2-1)}\cdot\ell!.$ So by Lemmas \ref{bound} and (\ref{o}), we have 
  \begin{equation}
  	\label{71}
  	q^{n^2-2}<4f^2\cdot(\ell!)^3\cdot q^{\ell(i^2-1)}\cdot\prod \limits_{j=1}^{i}(q^j - 1)^{2\ell}.
  \end{equation}
  Note that $4f^2\cdot\prod\limits_{j=1}^{i}(q^j-1)^{2\ell}\leq q^{i\ell(i+1)+2}.$ We now fix $\ell$ and let $\Phi_3(i)=i^{2\ell}-\ell\cdot(2i^2+i-1)-4.$ It is straightforward to check that $\Phi_3(i)$ is an increasing function of $i$, for $i\geq3$ and $\ell\geq 2$, and hence $\Phi_3(i)\geq\Phi_3(3)=3^{2\ell}-20\ell-4$. This together with (\ref{71}) implies that $2^{3^{2\ell}-20\ell-4}<(\ell!)^3.$ Hence $3^{2\ell}-20\ell-11<3\ell\log_2(\ell)<3\ell^2$, which is impossible for $\ell\geq2.$
\end{proof}

\begin{lemma}\rm\label{C8}
The subgroup $H$ cannot be a $\mathcal{C}_8$-subgroup.
\end{lemma}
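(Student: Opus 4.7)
The plan is to split the analysis into three subcases according to the type of non-degenerate form preserved by $H_0$, following \cite[Table~3.5.A]{MR1057341}: either $H_0$ is of symplectic type $\mkern-1mu \raisebox{0.5ex}{$\hat{\ }$} \mkern-3mu \PSp(n,q)$ with $n$ even, of unitary type $\mkern-1mu \raisebox{0.5ex}{$\hat{\ }$} \mkern-3mu \PSU(n,q_0)$ with $q=q_0^2$, or of orthogonal type $\mkern-1mu \raisebox{0.5ex}{$\hat{\ }$} \mkern-3mu P\Omega^{\epsilon}(n,q)$ (with the relevant parity conditions on $n$ and $q$). Since a $\mathcal{C}_8$-subgroup stabilises a non-degenerate form, it is never parabolic, so Lemma \ref{GuanC2} applies and inequality (\ref{o}) is in force.

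For each of the three types, I would use the order bounds in Lemma \ref{bound} together with (\ref{e2}) to upper-bound $|\Out(X)|^2\,|H_0|\,|H_0|_{p'}^2$ by an expression of the form $C(f,n)\,q^{\psi(n)}$, where $C(f,n)$ is polynomial in $f$ and $n$ and $\psi(n)$ is strictly less than $n^2-2$: concretely, $\psi(n)=\tfrac{3n(n+1)}{2}-\tfrac{n^2}{2}$ in the symplectic case (the $p$-part $q^{n^2/4}$ of $|\PSp(n,q)|$ cancels twice from $|H_0|_{p'}^2$), $\psi(n)=\tfrac{3(n^2-1)}{2}$ in the unitary case after rewriting in terms of $q=q_0^2$, and $\psi(n)\le \tfrac{3n(n-1)}{2}-\tfrac{n(n-2)}{2}$ in the orthogonal case. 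Combining with $|X|>q^{n^2-2}$ from Lemma \ref{bound} yields $q^{n^2-2}<C(f,n)\,q^{\psi(n)}$, and a logarithmic estimate (using $\log_2 q\ge 1$ and $\log_2 f<\tfrac{2}{3}f$ as in Lemma \ref{C3}) confines $(n,q)$ to a short finite list, with $n$ bounded by a small constant and $q$ bounded in terms of $n$.

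For each of the surviving small pairs $(n,q)$, I would use (\ref{e1}) to compute $v$ explicitly and check that $v$ is not a perfect square, contradicting $v=k^2$. Where the direct square-check is not decisive (typically when $q$ is very small), I would invoke Corollary \ref{GuanC1} together with a subdegree extracted from \cite[pp.~343--344]{MR1940339}, which sharpens the constraint to $k+1\mid (v-1,|\Out(X)||H_0|)$ and quickly eliminates the remaining configurations, exactly as in the $\mathcal{C}_5$ analysis of Lemma \ref{C5}.

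The main obstacle will be the orthogonal case, since the form type $\epsilon\in\{+,-,\circ\}$, the parity of $n$, and the parity of $q$ together produce several sub-cases; in each sub-case the $p'$-part of $|P\Omega^{\epsilon}(n,q)|$ must be tracked carefully so that the exponent $\psi(n)$ in the bound above is genuinely less than $n^2-2$. Once these exponents are pinned down uniformly, the surviving list is short and is amenable to the same perfect-square or subdegree-divisibility arguments already used in Lemmas \ref{C3}--\ref{C6}, thereby excluding all $\mathcal{C}_8$-configurations and completing the proof of the lemma.
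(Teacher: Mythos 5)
There is a genuine gap at the heart of your reduction: the claim that $|\Out(X)|^2|H_0||H_0|_{p'}^2$ is bounded by $C(f,n)\,q^{\psi(n)}$ with $\psi(n)<n^2-2$ fails precisely in the symplectic and unitary cases. Your own formulas show this: in the symplectic case $\psi(n)=\frac{3n(n+1)}{2}-\frac{n^2}{2}=n^2+\frac{3n}{2}$, and in the unitary case $\psi(n)=\frac{3(n^2-1)}{2}$; both exceed $n^2-2$ for every $n$, so inequality (\ref{o}) imposes no restriction at all on $(n,q)$. This is not a repairable arithmetic slip: a symplectic or unitary $\mathcal{C}_8$-subgroup has order roughly $|X|^{1/2}$ or larger, so $|H_0|\,|H_0|_{p'}^2$ always dominates $|X|$ and Lemma \ref{GuanC2} is vacuous for these types. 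Hence the ``short finite list'' of pairs $(n,q)$ you plan to check simply does not exist in two of the three cases, and the perfect-square and subdegree checks that follow have nothing to act on. Only in the orthogonal case, where $|H_0|$ is of order about $q^{n(n-1)/2}$, does the (\ref{o})-based estimate genuinely bound $n$ and $q$; that part of your plan coincides with what the paper does.

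The paper closes the gap by making subdegrees the primary tool, not a fallback, in the symplectic and unitary cases: by \cite[Table 4]{MR4412358} there are subdegrees $d=(q^n-1)(q^{n-2}-1)$ (symplectic) and $d=(q_0^n-(-1)^n)(q_0^{n-1}-(-1)^{n-1})$ (unitary, $q=q_0^2$). Lemma \ref{GuanL1} gives $k+1\mid(v-1,d)$, hence $v=k^2<(v-1,d)^2$, and comparing this with the lower bounds $v>q^{(n^2-n-4)/2}$ (symplectic) and $v>q_0^{n^2-4}$ (unitary) forces $n\in\{4,6\}$, respectively $n\in\{3,4\}$; only after this reduction do gcd/Euclidean computations together with Corollary \ref{GuanC1} eliminate the remaining small parameters. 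To salvage your outline you would have to promote the subdegree-divisibility argument from a device for ``very small $q$'' to the main engine of the symplectic and unitary cases, exactly as the paper does, keeping the (\ref{o})-based estimate only for the orthogonal type.
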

\begin{proof}
 Let now $H$  be a $\mathcal{C}_8$-subgroup. In this case, $H$ is a classical group and by \cite{MR1057341}, we have three possibilities here:

   \medskip
  \noindent
  {\bf Case 1:} $H$ is a symplectic group. Here $n=2i\geq4$ and by \cite[Proposition 4.8.3]{MR1057341}, we have that
  $$|H_0|=|\mkern-1mu \raisebox{0.5ex}{$\hat{\ }$} \mkern-3mu Sp(n,q)|\cdot(n/2,q-1).$$
  
  Now we first consider $n\geq8$. Then by (\ref{e1}) and Lemma \ref{bound}, we have that
  \begin{equation}
  	\label{s1}
  q^{(n^2-n-4)/2}<q^{(n^2-2n)/4}\prod\limits_{j=1}^{(n-2)/2}(q^{n-2j+1}-1)/(n/2,q-1).
   \end{equation}
     There exists a subdegree
     \begin{equation}
     	\label{s2}
     	d=(q^n-1)(q^{n-2}-1)
     \end{equation} 
      by \cite[Table 4]{MR4412358}. This  together the fact that $v-1$ is coprime to $q^{(n/2)-1}-1$ if $n/2$ is even and its coprime to $q^{n/2}-1$ if $n/2$ is odd,  implies that $f(q)=(v-1,d)\leq (q^n-1)(q^{(n/2)-1}+1)$. By Lemma \ref{GuanL1} and (\ref{s1}), we get that
  $$q^{(n^2-n-4)/2}<(q^n-1)^2(q^{(n/2)-1}+1)^2<q^{3n}.$$
  Hence $n^2-7n-4<0$, a contradiction. Thus $n=4$ or $6$.
  
  If $n=4,$ then $d=(q^4-1)(q^2-1)$ by (\ref{s2}), and according to (\ref{e1}), we have that  $$v=\frac{\prod\limits_{j=0}^{3}(q^4-q^j)}{(q-1)(4,q-1)}\cdot\frac{(4,q-1)}{q^4\cdot\prod\limits_{j=1}^{2}(q^{2j}-1)\cdot(2,q-1)}=\frac{(q^2+1)(q^3-1)}{(2,q-1)}.$$
  Assume that $q$ is odd, then 
  $(2(v-1),(q^2+1)(q-1)^2)\mid8$, hence $$(2(v-1),(q^4-1)(q^2-1))\mid 8\cdot(2(v-1),(q+1)^2).$$ It follows that $$k+1\mid8(q+1)^2,$$
which implies 
$\frac{(q^2+1)(q^3-1)}{2}=k^2<64(q+1)^4.$  
However, by direct computation, there is no solution, which gives rise to any possible parameters $(v,k)$ for $v=k^2.$
Now suppose that $q$ is even, then $(v-1,(q^2+1)(q-1)^2)=1$, and hence $$(v-1,(q^4-1)(q^2-1))=(v-1,(q+1)^2).$$
By Lemma \ref{GuanC1}, we have that $k+1$ divides $(q+1)^2.$
It follows that $$q^5+q^3-q^2-2<(q+1)^4,$$
which forces $q=2$, but $v$ is not a perfect square in this case.

The case of $n=6$ can likewise be excluded.

\medskip
\noindent
{\bf Case 2:} $H$ is an orthogonal group. By \cite[Proposition 4.8.4]{MR1057341}, we have that
\begin{equation}
	\label{o1}
 |H_0|=|PSO^{\epsilon}(n,q)|\cdot(n,2),
 \end{equation}
  where $q$ is odd: this is certainly true if the dimension is odd, and for even dimension it follows from the maximality of $H$ in $G$(see \cite[Table 3.5A]{MR1057341}).

\medskip
\noindent
{\bf Subcase 1:} Assume first $\epsilon=\circ$ and $n=2i+1\geq5$. By Lemmas \ref{bound} and (\ref{o}), we have that 
 
 $$q^{4^{i^2}+4i-1}<4f^2\cdot(2i+1,q-1)^2\cdot q^{2i^2+i}\cdot\prod\limits_{j=1}^{i}(q^{2j}-1)^2.$$ Furthermore, $\prod\limits_{j=1}^{i}(q^{2j}-1)^2<q^{2i^2+2i}$, thus $$q^{i-1}<4f^2\cdot(2i+1,q-1)^2.$$ This inequality holds only for $i\leq4$. 

 If $i=2$, then we conclude that
\begin{equation}
	\label{i2}
q<4\cdot25\cdot f^2
\end{equation}
According to (\ref{e1}), (\ref{o1}) and (\ref{i2}), it is easy to see that $v$ is not a perfect square by simple computations. The cases for $i=3$ or 4 can be ruled out similarly.


If $i=1$, then $H_0=SO_3(q)$ with $q$ odd, it follows that $|H_0|=\frac{q(q^2-1)}{2},$ and hence $$v=\frac{2q^2(q^3-1)}{(3,q-1)}.$$
Here $|\Out(X)||H_0|=fq(q^2-1)(3,q-1).$
Suppose first that $(3,q-1)=1$, then $v=2q^2(q^3-1)$.
Since $$(v-1,fq(q^2-1))=\left(-2q^2+2q-1,fq(q^2-1)\right)$$
and
$$2q(q^2-1)=(2q^2-2q+1)(q+1)-(q+1),$$
we have that 
$$(v-1,fq(q^2-1))\mid f(2q^2-q+1,(q+1)).$$
According to Corollary \ref{GuanC1}, we get that $$k+1\mid f^2(q+1)^2.$$
It follows that
$$k^2=2q^2(q^3-1)\leq (f(q+1)-1)^2<f^2(q+1)^2,$$
Thus $$2p^{2f}(p^{3f}-1)<f^2(p^f+1)^2.$$
However, no odd prime $p$ and positive integer $f$ satisfy this inequality, a contradiction.

\noindent
Now we suppose that $(3,q-1)=3$, then $v=\frac{2}{3}q^2(q^3-1).$ 
Note that  $$(3(v-1),3fq(q^2-1)=3f(-2q^2+2q-3,q(q^2-1))$$
and
$$2q(q^2-1)=(q+1)(2q^2-2q+3)-(3q+3),$$
By Corollary \ref{GuanC1}, we have that
$$k+1\mid9f(q+1),$$
and hence
$$k^2=\frac{2}{3}q^2(q^3-1)\leq(9f(q+1)-1)^2<81f^2(q+1)^2.$$
Thus, we conclude that $q\in\{3,5\}$, $v=2^2\cdot13$ or $2^3\cdot5^2\cdot31/3$, it is easy to see that $v$ is not a perfect square. 
 \noindent
Base on the above analysis, the case $H_0=SO_3$ can be ruled out.

\medskip
\noindent
{\bf Subcase 2:} Assume now that $\epsilon\in\{-,+\}$ and $n=2i\geq6$. By Lemma \ref{bound} and (\ref{o}), we have that 
$$q^{4i^2-2}<4\cdot8\cdot(2i,q-1)^2\cdot q^{2i^2-i}(q^i-\epsilon1)\prod\limits_{j=1}^{i-1}(q^{2j}-1)^2.$$ 
Since $(q^i-\epsilon1)\prod\limits_{j=1}^{i-1}(q^{2j}-1)^2<q^{2i^2-2}$, it follows that $q^{2i-4}<4\cdot8\cdot f^2$ which is true only for $(p,f,i)=(3,1,3),(3,2,3)$ and $(5,1,3)$. However, according to (\ref{e1}) and (\ref{o1}), these values give rise to no possible parameter set as $v$ is not a perfect square.

\medskip
\noindent
{\bf Case 3:} $H$ is a unitary group over the field of $q_0$ elements, where $q=q_0^2.$
Here by \cite[Proposition 4.8.5]{MR1057341}, we have that $|H_0|=|\mkern-1mu \raisebox{0.5ex}{$\hat{\ }$} \mkern-3mu SU(n,q_0)|\cdot(n,q_0-1).$ Then by (\ref{e1}) and Lemma \ref{bound}, we get $v>q_0^{n^2-4}.$ Now we consider the stabilizers in $H$ and $G$ of a non-singular 2-subspace of $V$, there exists a subdegree 
\begin{equation}
	\label{s3}
	d=(q_0^n-(-1)^n)(q_0^{n-1}-(-1)^{n-1})
\end{equation}
 by \cite[Table 4]{MR4412358}. Then we have that
$q_0^{n^2-4}<k^2<d^2$, which implies that $q_0^{n^2-4n-3}<2$. Thus $n^2-4n-3<0$, and hence $n=3$ or $4$. 
 \noindent
If $n=3$, then $$v=q_0^3(q_0^2+1)(q_0^3-1)\cdot\delta,$$
where $\delta=\frac{(3,q_0+1)}{(3,q_0^2-1)}.$ Note that $H_0$  is not a parabolic subgroup, as shown in \cite[Table 8.3]{MR3098485}.
\noindent
{\bf Subcase 1:} Suppose that $\delta=1$, then $v=q_0^3(q_0^2+1)(q_0^3-1)$, and $$|\Out(X)||H_0|=2fq_0^3(q_0^3+1)(q_0^2-1)\cdot\frac{1}{\delta}=2fq_0^3(q_0^3+1)(q_0^2-1).$$
Since  $(v-1,q_0)=1$, 
$$(v-1,|\Out(X)||H_0|)=(q_0^3(q_0^2+1)(q_0^3-1)-1,2f(q_0^3+1)(q_0^2-1)).$$
Since $$q_0^3(q_0^2+1)(q_0^3-1)-1=(q_0^3+1)(q_0^2-1)(q_0^3+2q_0-2)+\phi_1(q_0)$$
and 
$$2(q_0^3+1)(q_0^2-1)=\phi_1(q_0)(q_0+2)+\varphi_1(q_0),$$
where $\phi_1(q_0)=2q_0^4-4q_0^3+2q_0^2+2q_0-3$, $\varphi_1(q_0)=4q_0^3-4q_0^2-q_0+4.$
From $\phi_1(q_0)$ is odd, then $$(v-1,(q_0^2-1)(q_0^3+1))\mid(\phi_1(q_0),\varphi_1(q_0)).$$
By Corollary \ref{GuanC1}, $$k+1\mid2f\varphi_1(q_0).$$
It follows that 
\begin{equation}
	\label{310}
	q_0^5(q_0^3-1)<k^2=q_0^3(q_0^2+1)(q_0^3-1)\leq(2f\varphi_1(q_0)-1)^2<64f^2q_0^6 
\end{equation}
by $4<4q_0^2+q_0$. Let $q_0=p^a$, then $f=2a.$ 
According to (\ref{310}), we have that
\begin{equation}
	\label{st}
	p^{3a}-1<256a^2\cdot p^a.
\end{equation}
 
\noindent
By little calculation, all possible values of 
$q_0$ and $v$ be obtained which are listed in Table \ref{Table:20}. However, $v$ is not a perfect square.

\begin{table}[h]
	\centering
	\caption{Possible values for $v$ with $q_0 = p^a$\label{Table:20}}
	\begin{tabular}{llll}
		\toprule
		$q_0$   & $v$&$q_0$& $v$ \\
		\midrule
		$2$&$2^3\cdot5\cdot7$&$3$&$2^2\cdot3^3\cdot5\cdot7$\\
		$5$&$2^3\cdot5^3\cdot13\cdot31$&$8$&$2^9\cdot5\cdot7\cdot13\cdot73$\\
		$9$&$2^4\cdot3^6\cdot7\cdot13\cdot41$&$11$&$2^2\cdot5\cdot7\cdot11^3\cdot19\cdot61$\\
		$27$&$2^2\cdot3^9\cdot5\cdot13\cdot73\cdot757$&$32$&$2^{15}\cdot5^2\cdot7\cdot31\cdot41\cdot151$\\
		\bottomrule
	\end{tabular}
\end{table}

\noindent
{\bf Subcase 2:} Suppose that $\delta=\frac{1}{3}$, then $(3,q_0^2-1)=(3,q_0-1)=3$. Thus
$$v=\frac{q_0^3(q_0^2+1)(q_0^3-1)}{3},$$ and
$$|\Out(X)||H_0|=6fq_0^3(q_0^2-1)(q_0^3-1).$$
According to $(3,q_0)=1$ and Lemma \ref{GuanC1}, we have that $$k+1\mid\frac{1}{3}(q_0^3(q_0^2+1)(q_0^3-1)-3,18f(q_0^2-1)(q_0^3-1).$$ 
From
$$q_0^3(q_0^2+1)(q_0^3-1)-3=(q_0^2-1)(q_0^3-1)(q_0^3+2q_0)-(2q_0+3),$$and
$$32(q_0^2-1)(q_0^3-1)=(2q_0+3)(16q_0^4-24q_0^3+20q_0^2-46q_0+69)-175,$$ it follows that $$k+1\mid1050f.$$ Then $$k^2=\frac{q_0^3(q_0^2+1)(q_0^3-1)}{3}\leq(1050f-1)^2<1050^2f^2,$$ and hence 
\begin{equation}
	\label{310_3}
	p^{6a}(p^{4a}+1)(p^{6a}-1)<1050^2\cdot12a^2.
\end{equation}
According to the above inequality (\ref{310_3}), $a=1, q_0=2$, and $v=\frac{2^3\cdot5\cdot7}{3}$, a contradiction. 

If $n=4$, then $v=(q_0^4+1)(q_0^3-1)(q_0^2+1)q_0^6/(4,q_0-1).$ In which case, $d=(q_0^4-1)(q_0^3+1)$ by (\ref{s3}). Thus we have that $$(q_0^4+1)(q_0^3-1)(q_0^2+1)q_0^6/4<v<(q_0^4-1)^2(q_0^3+1)^2$$ 
by Lemma \ref{GuanL1}. Hence $q_0=2$ or $3$, which is impossible as $v$ is not a perfect square.
\end{proof}

\begin{lemma}\rm\label{S}
	The subgroup $H$ cannot be a $\mathcal{S}$-subgroup.
\end{lemma}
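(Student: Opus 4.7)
The plan is to combine Liebeck's general upper bound on the order of an $\mathcal{S}$-type maximal subgroup of a finite classical group with the inequality (\ref{o}) from Lemma \ref{GuanC2}. Since an $\mathcal{S}$-subgroup is never parabolic, (\ref{o}) applies directly and yields $|X| < |\Out(X)|^2 |H_0|\,|H_0|_{p'}^2 \leq |\Out(X)|^2 |H_0|^3$. By Liebeck's theorem, any $\mathcal{S}$-type maximal subgroup of a classical group on an $n$-dimensional natural module over $\mathbb{F}_q$ satisfies $|H_0| < q^{3n}$. Using the lower bound $|X| > q^{n^2-2}$ from Table \ref{Table:bound} and $|\Out(X)| = 2f(n,q-1) \leq 2fn$ from (\ref{e2}), I would deduce
\[
q^{n^2-2} < 4f^2 n^2 \cdot q^{9n},
\]
so that $q^{n^2-9n-2} < 4f^2 n^2$. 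After absorbing $f$ via $f \leq \log_p q$, this polynomial inequality in $n$ forces $n$ to lie below a small explicit constant.

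For each of the finitely many surviving pairs $(n,q)$, I would then consult the explicit enumeration of maximal $\mathcal{S}$-subgroups of $PSL(n,q)$ for $n \leq 12$ in \cite{MR3098485}. For every candidate $H_0$ listed there, $v = |X|/|H_0|$ is determined by (\ref{e1}), and the first test is whether $v$ is a perfect square; in the great majority of cases this is immediately refuted by the prime factorization of $v$. The remaining few survivors are closed out by Corollary \ref{GuanC1}: since $k+1$ must divide $(v-1, |\Out(X)||H_0|)$ and the right-hand side is now completely explicit, one obtains an upper bound on $k+1$ incompatible with $k = \sqrt{v}$, yielding the contradiction.

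The main obstacle is arithmetic rather than conceptual: the $\mathcal{S}$-tables in \cite{MR3098485} are long, so the reduction step produces a sizeable catalogue of cases to dispose of. Each individual case is however routine --- either a one-line square-check on the prime factorization of $v$, or a single divisibility argument from Corollary \ref{GuanC1} --- and no structural input beyond Liebeck's bound, (\ref{o}), and the subdegree-based divisibility constraint is required. A secondary subtlety is that Liebeck's bound $|H_0| < q^{3n}$ must in a few low-dimensional exceptional cases (notably $n \leq 4$) be replaced by the exact orders read from \cite{MR3098485}, but these are precisely the cases already handled by the table-check.
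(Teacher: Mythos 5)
Your overall strategy coincides with the paper's: combine inequality (\ref{o}) with Liebeck's bound $|H|<q^{3n}$ from \cite{MR0779398} to force $n$ small, then run through the $\mathcal{S}$-lines of the tables in \cite{MR3098485} and eliminate each candidate via the perfect-square condition on $v$ together with the divisibility constraint of Corollary \ref{GuanC1}. (The paper additionally sharpens $n\leq 9$ to $n\leq 7$ using \cite[Corollary 4.1]{MR0779398}, and disposes of $n=5,6,7$ by citing \cite{MR4908113}; these are conveniences rather than essential differences.)

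There is, however, one genuine gap in the execution as you describe it: the inequality $q^{n^2-9n-2}<4f^2n^2$ bounds only $n$, not $q$ --- for every $n$ in the surviving range the exponent $n^2-9n-2$ is negative or tiny, so the inequality says nothing about $q$. Your claim of ``finitely many surviving pairs $(n,q)$'' is therefore false, and your ``first test: is $v$ a perfect square'' cannot be run case-by-case, because each $\mathcal{S}$-line of the tables is a family parametrized by infinitely many $q$ (e.g.\ $H_0\cong PSL(2,7)\leq PSL(3,q)$ for all odd primes $q$). One must first bound $q$, and your own second tool is what does it: on each $\mathcal{S}$-line $|H_0|$ is a constant (or of much smaller order than $|X|$), so Lemma \ref{GuanL2} / Corollary \ref{GuanC1} gives $k+1\leq |\Out(X)||H_0|$, which is linear in $f$, while $v=k^2$ grows like a fixed power of $q$; this pins $q$ to a short explicit list, after which the perfect-square and divisibility checks finish the argument. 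This is precisely the order in which the paper argues (e.g.\ $k+1\mid 1008f$ for $H_0=PSL(2,7)$, forcing $q\in\{3,5,7,9,11\}$). So your proof is salvageable with the tools you already cite, but as written the central case-check would stall on infinitely many values of $q$.
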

\begin{proof}
 Let $H$ be a $\mathcal{S}$-subgroup. By Lemmas \ref{GuanC2} and \ref{bound}, we have that 
$$q^{n^2-2}<|PSL(n,q)|=|X|<|G|<|H|^3.$$
Moreover, by \cite[Theorem 4.1]{MR0779398}, we know that $|H|<q^{3n}.$ Hence $q^{n^2-2}<|H|^3<q^{9n},$ which yields $n^2-2<9n,$ and so $n\leq9$. Further, it follows from  \cite[Corollary 4.1]{MR0779398} that either $n=z(z-1)/2$ for some integer $a$, or $|H|<q^{2n+4}.$ If $n=z(z-1)/2$, then as $n\leq9$, we have $n=3$ or $6.$ If $|H|<q^{2n+4},$ then since $q^{n^2-2}<|H|^3$, we conclude that $q^{n^2-2}<q^{6n+12}$. So $n^2-2<6n+12$ and hence $n\leq7.$
 Therefore, we always have $n\leq7$. Thus $n\in\{3,4,5,6,7\}.$ For this values of $n$, the possibilities for $H_0$ can be read off from \cite[Tables 8.4, 8.9, 8.19, 8.25, 8.36]{MR3098485}. In Table \ref{Table:q}, we list $(X,H_0)$  with the conditions recorded in the fourth column. By Lemmas \ref{bound}
 and (\ref{o}), and this fact that $\Out(X)=2f(n,q-1),$ we have
 \begin{table}[h]
 	\centering
 	\caption{$(X,H_0)$ with conditions for $n\in\{3,4,5,6,7\}$\label{Table:q}}
 	\begin{tabular}{llllccc}
 		\toprule
 		Lines&$X$&$H_0$&Conditions&\\
 		\midrule
 		1&$PSL(3,q)$&$PSL(2,7)$&$q=p$ odd&\\
 		2&$PSL(3,q)$&$A_6$&$q=p$ or $q=p^2$ odd&\\
 		3&$PSL(4,q)$&$PSL(2,7)$&$q=p$ odd&\\
 		4&$PSL(4,q)$&$A_7$&$q=p$&\\
 		5&$PSL(4,q)$&$PSU(4,2)$&$q=p\equiv1$ (mod 6)&\\
 		6&$PSL(5,q)$&$PSL(2,11)$&$q=p$ odd& \\
 		7&$PSL(5,q)$&$M_{11}$&3& \\
 		8&$PSL(5,q)$&$PSU(4,2)$&$q=p\equiv 1$ (mod 6)& \\
 		9&$PSL(6,q)$&$A_6.2_3$&$q=p$ odd& \\
 		10&$PSL(6,q)$&$A_6$&$q=p$ or $p^2$ odd& \\
 		11&$PSL(6,q)$&$PSL(2,11)$&$q=p$ odd& \\
 		12&$PSL(6,q)$&$A_7$&$q=p$ or $p^2$ odd& \\
 		13&$PSL(6,q)$&$PSL(3,4)^{\cdot}2_1^-$&$q=p$ odd& \\
 		14&$PSL(6,q)$&$PSU(3,4)$&$q=p$ odd& \\
 		15&$PSL(6,q)$&$M_{12}$&$q=3$& \\
 		16&$PSL(6,q)$&$PSU(4,3)^{\cdot}2_2^-$&$q=p\equiv 1$ (mod 12)& \\
 		17&$PSL(6,q)$&$PSU(4,3)$&$q=p\equiv 7$ (mod 12)& \\
 		18&$PSL(6,q)$&$PSL(3,q)$&$q$ odd& \\
 		19&$PSL(7,q)$&$PSU(3,3)$&$q=p$ odd& \\
 		\bottomrule
 	\end{tabular}
 \end{table}
 \begin{equation}
 	\label{q}
 q^{n^2-4}<4f^2\cdot|H_0|\cdot|H_0|^2_{p^{\prime}}.
 \end{equation} 
Firstly, we consider $n=5,6$ or $7$. Here we use the result of \cite[Table 5]{MR4908113}, We only need to consider the case where $X=PSL(5,3)$ and $H_0\cong M_{11}$, and this case is not impossible for $v=2^6\cdot3^7\cdot11\cdot19$. Thus $n=5,6$ or 7 can be ruled out.

\noindent
Now we consider $n=3.$

\medskip
\noindent
{\bf Case 1:} $H_0=PSL(2,7)$ with $q$ odd,
then $|H_0|=168$, and 
$$v=\frac{q^3(q^2-1)(q^3-1)}{168\cdot(3,q-1)}.$$
By Lemma \ref{GuanL2}, 
$$k+1\mid1008f.$$
Thus, we get that
$$\frac{q^3(q^2-1)}{504}<k^2=\frac{q^3(q^2-1)(q^3-1)}{168\cdot(3,q-1)}\leq(1008f-1)^2<1008f^2,$$
it follows that
\begin{equation}
	\label{334}
	q\in\{3,5,7,9,11\}.
\end{equation}
\noindent
According to (\ref{334}), a list of possible values for $v$ is provided in the Table \ref{Table:333}. However, $v$ is not a perfect square.
\begin{table}[h]
	\centering
	\caption{Possible values for $v$  with $q=p^f$ \label{Table:333}}
	\begin{tabular}{llcllcl}
		\toprule
		$q$   & $v$ & $v$ perfect square&$q$   & $v$ & $v$ perfect square\\
		\midrule
		$3$&$2\cdot3^2\cdot13/7$&No&$5$&$2^2\cdot5^3\cdot31/7$&No\\
		$7$&$2^2\cdot3\cdot7^2\cdot19$&No&$9$&$2^4\cdot3^4\cdot5\cdot13$&No\\
		$11$&$2\cdot5^2\cdot11^3\cdot19$&No\\
		\bottomrule
	\end{tabular}
\end{table}

\medskip
\noindent
{\bf Case 2:} $H_0=A_6$ with $q$ odd, then $|A_6|=360$, and hence $$v=\frac{q^3(q^2-1)(q^3-1)}{360\cdot(3,q-1)} .$$
According to Lemma \ref{GuanL2}, we have that 
$k+1\mid 2160f.$ Thus $$\frac{q^3(q^2-1)(q^3-1)}{1080}<\frac{q^3(q^2-1)(q^3-1)}{360\cdot(3,q-1)}\leq(2160f-1)^2<2160^2\cdot f^2,$$
it follows that 
\begin{equation}
	\label{336}
	q\in\{3,5,7,11,13\}.
\end{equation}
Therefore, the possible values of $v$ are listed in Table \ref{Table:336}. The case can be ruled out as $v$ is not a perfect square. 
	\begin{table}[h]
	\centering
	\caption{Possible values for $v$  with $q=p^f$\label{Table:336}}
	\begin{tabular}{llcllcl}
		\toprule
		$q$ & $v$&$q$& $v$ \\
		\midrule
		$3$&$2\cdot3\cdot13/5$&$5$&$2^2\cdot5^2\cdot31/3$\\
		$7$&$2^2\cdot7^3\cdot19/5$&$11$&$2^4\cdot3^4\cdot7\cdot13$\\
		$13$&$2\cdot5\cdot7\cdot11^3\cdot19/3$\\
		\bottomrule
	\end{tabular}
\end{table}

\noindent
Similarly, for $n=4$, Lemma \ref{GuanL2} can be applied to rule out this case, just as it was for $n=3.$



\end{proof}

\medskip
\noindent
{\bf Proof of Theorem \ref{th1.2}} It follows immediately from Lemmas \ref{C1}-\ref{S}. \hfill\qedsymbol 

\section{Reduction and construction for design}
In this section, we will impose the condition $\lambda\mid k$ to conduct a more detailed investigation on block-transitive $t$-$(k^2,k,\lambda)$ designs admitting an automorphism group $G$ with socle being $PSL(n,q)(n\geq3)$. According to the results of Theorem \ref{th1.2}, $\mathcal{D}$ must be a $2$-$(k^2,k,\lambda)$ design. Moreover, if $\lambda=1$,  then $G$ is also flag-transitive(\cite{MR0732265}). However, there exist no flag-transitive $2$-$(k^2,k,1)$ designs(\cite[Lemma 3.5]{MR4516389}). Hence, in the proof of Theorem \ref{th13}, we always assume that $\lambda\geq2.$
The commands mentioned in the proof below are performed by the computer algebra system GAP(\cite{GAP4}) and MAGMA(\cite{MR1484478}).
\begin{lemma}\rm\label{4.1}
		Let $\mathcal{D} = (\mathcal{P},\mathcal{B})$ be a non-trivial $t$-$(k^2,k,\lambda)$ design with $\lambda\mid k$, admitting a block-transitive automorphism $G$ and $X\leq G\unlhd \Aut(X)$, where $X = PSL(n,q)(n\geq3).$ Then $G=PSL(3,3)$ or $PGL(3,3)$, and $\mathcal{D}$ is a $2$-$(12^2,12,\lambda)$ design.
\end{lemma}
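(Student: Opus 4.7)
The plan is to start from Theorem~\ref{th1.2}, which forces $t=2$ and restricts $(n,q,v,k)$ to one of three cases: $(3,3,144,12)$, $(4,7,400,20)$, or $(5,3,121,11)$. For each case I would enumerate the values of $\lambda$ permitted by the divisibility $\lambda\mid k$, and then eliminate those incompatible with block-transitivity.

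First, $\lambda=1$ is immediate: any block-transitive group acting on a $2$-$(k^2,k,1)$ design is flag-transitive by \cite{MR0732265}, contradicting the nonexistence result \cite[Lemma 3.5]{MR4516389}. Hence $\lambda\geq 2$; the remaining candidates are $\lambda\in\{2,3,4,6,12\}$ for $k=12$, $\lambda\in\{2,4,5,10,20\}$ for $k=20$, and $\lambda=11$ for $k=11$.

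The crux is to rule out the two cases $(n,q)\in\{(4,7),(5,3)\}$. Because $X=PSL(n,q)$ acts $2$-transitively on the set $\mathcal{P}$ of $1$-dimensional subspaces, any $G$-orbit on $k$-subsets of $\mathcal{P}$ automatically yields a $2$-design, so a block-transitive $2$-$(v,k,\lambda)$ design exists precisely when some $k$-subset $B\subset\mathcal{P}$ has $G$-orbit of length $b=\lambda k(k+1)$; equivalently, when $G$ contains a subgroup $G_B$ of order $|G|/b$ that stabilises a $k$-set setwise. For each remaining triple $(v,k,\lambda)$ I would construct $G$ in its primitive action on $\mathcal{P}$ in MAGMA, enumerate the conjugacy classes of subgroups of the required order, and test whether any admits a union of orbits on $\mathcal{P}$ of total length $k$. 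This will eliminate $(4,7,400,20)$ for every admissible $\lambda$ and $(5,3,121,11)$ for $\lambda=11$.

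Only $X=PSL(3,3)$ with $v=144$ then survives, and since $|\Out(PSL(3,3))|=2$ the hypothesis $X\unlhd G\leq\Aut(X)$ forces $G$ to be either $PSL(3,3)$ or its extension by the graph automorphism (denoted $PGL(3,3)$ in the statement). The main obstacle I anticipate is the completeness of the computer search in case $(4,7)$, where $|G|$ exceeds $10^{10}$ and many conjugacy classes of subgroups have the relevant orders; one must organise the calculation carefully, e.g. by first restricting the admissible values of $|G_B|$ via the requirement that some union of $G_B$-orbits on $\mathcal{P}$ has length exactly $k$, before invoking the full orbit-structure test.
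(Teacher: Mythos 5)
Your skeleton coincides with the paper's: invoke Theorem~\ref{th1.2} to get $t=2$ and the three parameter sets, discard $\lambda=1$ via the flag-transitivity argument (exactly the reduction the paper makes at the start of Section~4), and reduce the lemma to eliminating $(4,7,400,20)$ and $(5,3,121,11)$. Where you genuinely diverge is in how those two cases are killed. The paper's argument is much lighter than your proposed search: since $\mathcal{B}=B^G$, the block number $b=\lambda v(v-1)/k(k-1)=\lambda k(k+1)$ must equal the index $|G:G_B|$ of a subgroup, so $PSL(4,7)\unlhd G\leq PGL(4,7)$ would need a subgroup of index $420\lambda$ with $\lambda\in\{2,4,5,10,20\}$, and $G=PSL(5,3)$ one of index $12\cdot 11^2$; such subgroups lie inside maximal subgroups whose indices divide $b$, and inspection of the maximal subgroup tables \cite[Tables 8.8--8.9, 8.18--8.19]{MR3098485} shows no such index occurs (the paper also first discards the groups containing a graph automorphism, which are not point-primitive with the given stabilizer). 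In particular no $2$-transitivity is needed: your observation that every $k$-set orbit of a $2$-transitive group is a $2$-design only matters for the existence direction, which this lemma does not claim.

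The genuine weak point is the step you flag yourself. ``Enumerate the conjugacy classes of subgroups of order $|G|/b$'' in $PSL(4,7)$, a group of order roughly $2.3\times10^{12}$, is not something \texttt{Subgroups} with \texttt{OrderEqual} will deliver, and your proposal offers no argument for the asserted outcome that the search ``will eliminate'' $(4,7,400,20)$ for every admissible $\lambda$ and $(5,3,121,11)$ for $\lambda=11$; as written, the decisive claim of the lemma is deferred to a computation that is neither performed nor feasible in the stated form, and your suggested remedy (pre-restricting $|G_B|$ by orbit lengths) does not repair this, since one cannot test orbit lengths without first having the subgroups. The natural fix is to use that $G_B$ has \emph{small index} $b\leq 8400$ rather than prescribed order: every subgroup of index $b$ is contained in a maximal subgroup of index dividing $b$, so the whole question reduces to scanning the known lists of maximal subgroup indices of $PSL(4,7)$, $PGL(4,7)$ and $PSL(5,3)$ --- at which point your computation collapses into exactly the table lookup the paper performs.
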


\begin{proof}
According to Theorem \ref{th1.2}, $t=2$, $X=PSL(3,3),PSL(4,7)$ or $PSL(5,3)$ and  $k=12,20$ or 11 respectively.
Since $G$ is primitive on $\mathcal{P}$, $G=\Aut(PSL(4,7))$ and $\Aut(PSL(5,3))$ can be ruled out by using MAGMA(\cite{MR1484478}). Thus, we have $PSL(3,3)\unlhd G\leq PGL(3,3)$, $PSL(4,7)\unlhd G\leq PGL(4,7)$, or $G=PSL(5,3).$ By Lemma \ref{21}, we have $b=\frac{\lambda v(v-1)}{k(k-1)}=\lambda k(k+1)$. If $PSL(4,7)\unlhd G \leq PGL(4,7)$ and $k=20$, then $b=20\cdot21\cdot\lambda$ with $\lambda\mid20$ and $\lambda \geq2$. However, $PSL(4,7)\unlhd G \leq PGL(4,7)$ does not have any transitive permutation representations of degree $420\lambda$ by \cite[Tables 8.8-8.9]{MR3098485}. If $G=PSL(5,3)$ and $k=11$, then $b=12\cdot11^2$ as $\lambda \mid 11$ and $\lambda\geq2$.
Similarly, $G\cong PSL(5,3)$ does not have any transitive permutation representations of degree $12\cdot11^2$ by \cite[Tables 8.18-8.19]{MR3098485}. Thus $G=PSL(3,3)$ or $PGL(3,3)$, and $\mathcal{D}$ is a $2$-$(12^2,12,\lambda)$ design.
\end{proof}
 We next will use the software GAP(\cite{GAP4}) and MAGMA(\cite{MR1484478}) to construct all block-transitive $2$-$(144,12,\lambda)$ designs with $\lambda\mid k$, whose automorphism group $G=PSL(3,3)$ or $PGL(3,3)$.
\begin{lemma}\rm\label{4.2}
			Let $\mathcal{D} = (\mathcal{P},\mathcal{B})$ be a non-trivial $2$-$(12^2,12,\lambda)$ design with $\lambda\mid 12$, admitting a block-transitive automorphism $G=PSL(3,3)$. Then $\lambda$ = 3 or 12.
\end{lemma}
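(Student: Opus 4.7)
The plan is computational, exploiting the small order $|G|=|PSL(3,3)|=5616$ to allow a direct enumeration in \textsf{GAP} and \textsf{MAGMA}. The basic parameter equation $b=\lambda v(v-1)/(k(k-1))$ with $(v,k)=(144,12)$ gives $b=156\lambda$, so the block stabiliser satisfies $|G_B|=|G|/b=36/\lambda$. Since we may assume $\lambda\ge 2$ and $\lambda\mid 12$, the admissible values are $\lambda\in\{2,3,4,6,12\}$, yielding $|G_B|\in\{18,12,9,6,3\}$ respectively.

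First I would construct the faithful primitive permutation representation of $G$ of degree $144$, realised as the action on the right cosets of the Singer-type subgroup $H_0\cong 13{:}3$ identified in Lemma \ref{C3}. Then, for each of the five target orders $36/\lambda$, I would enumerate representatives of the conjugacy classes of subgroups $H\le G$ of that order, using the subgroup lattice of $G$ that is available directly in \textsf{MAGMA}.

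Next, for each such $H$, I would search for all $12$-subsets $B\subseteq\mathcal{P}$ stabilised by $H$; any such $B$ must be a union of $H$-orbits on $\mathcal{P}$ whose sizes sum to $12$, so the search is finite and small. The resulting list of candidates, cut down further by the $N_G(H)$-action to eliminate $G$-equivalent blocks, would be tested by forming the orbit $B^G$ and verifying the $2$-design property, namely that every unordered pair of points of $\mathcal{P}$ lies in exactly $\lambda$ blocks. A given $\lambda$ is realised precisely when at least one candidate passes this test.

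The expected outcome is that valid designs appear only for $\lambda=3$ and $\lambda=12$, while $\lambda\in\{2,4,6\}$ produces no surviving block. The main obstacle is not conceptual but bookkeeping: one must ensure that the enumeration exhausts every conjugacy class of subgroups of the prescribed orders and, within each, every union of $H$-orbits of total size $12$. Given the small order of $G$ and the modest degree $144$, this is routine in \textsf{MAGMA}, and the existence of designs in the two surviving cases can be certified by exhibiting an explicit block $B$ together with the verified pair-count of its $G$-orbit.
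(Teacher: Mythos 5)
Your proposal is correct and follows essentially the same route as the paper: compute $b=156\lambda$ and $|G_B|=36/\lambda$ for $\lambda\in\{2,3,4,6,12\}$, enumerate conjugacy classes of subgroups of $PSL(3,3)$ of the relevant order in the degree-$144$ representation, form all $12$-sets that are unions of orbits of such a subgroup, and test the $2$-design condition with GAP/MAGMA, which yields designs exactly for $\lambda=3$ and $\lambda=12$. The only differences are cosmetic (realising the action on cosets of $13{:}3$ instead of via explicit ATLAS generators, and pruning candidates by the $N_G(H)$-action).
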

\begin{proof}
Suppose that $G=PSL(3,3)$, and let $B\in\mathcal{B}$ be a block of $\mathcal{D}=(\mathcal{P},\mathcal{B})$. Since $G$ is block-transitive, $B$ is a union of $G_B$-orbits on $\mathcal{P}$ and $b=|G:G_B|=|B^G|$. Let $\mathcal{P}=\{1,2,\cdots,144\}$ and $\mathcal{P}(G)=\langle g_1,g_2\rangle$(See ATLAS\cite{MR0827219}) be the image of the permutation representations of $G$ on $\mathcal{P}$, where 
\begin{align*}
	g_1=&(1,2)(3,5)(4,6)(7,11)(8,12)(9,13)(10,14)(15,23)(16,24)(17,25)(18,26)(19,
	27)(20,28)\\
	&(21,29)(22,30)(31,42)(32,43)(33,44)(34,45)(35,46)(36,47)(37,48)
	(38,49)(39,50)(40,51)\\
	&(41,52)(53,68)(54,69)(55,70)(56,71)(57,72)(58,73)
	(59,74)(60,75)(61,76)(62,77)(63,78)\\
	&(64,79)(65,80)(66,81)(67,82)(83,95)
	(84,103)(85,104)(86,91)(87,105)(88,106)(89,107)\\
	&(90,108)(92,93)(94,109)
	(96,110)(97,111)(98,102)(99,112)(100,113)(101,114)(115,117)\\
	&(116,127)(118,
	128)(119,123)(120,129)(121,130)(122,126)(124,131)(125,132)(133,137)\\
	&(134,
	136)(135,139)(138,140)(141,142)(143,144)
\end{align*}
and
\begin{align*}
	g_2=&(2,3,4)(5,7,8)(6,9,10)(11,15,16)(12,17,18)(13,19,20)(14,21,22)(24,31,32)
	(25,33,28)\\
	&(26,34,35)(27,36,37)(29,38,39)(30,40,41)(42,53,50)(43,45,54)
	(44,55,56)(46,57,58)\\
	&(47,59,60)(48,61,62)(49,63,64)(51,65,66)(52,67,68)
	(69,83,84)(70,85,86)(71,87,88)\\
	&(72,89,90)(73,91,92)(75,93,94)(76,95,81)
	(77,96,82)(78,97,98)(79,99,100)(80,101,102)\\
	&(103,115,116)(104,117,114)
	(105,118,108)(106,119,120)(107,121,122)(109,110,112)\\
	&(111,123,124)(113,
	125,126)(128,133,134)(131,135,136)(132,137,138)(139,141,140)\\
	&(142,143,144).
\end{align*}
 Here we have that $\lambda\in\{2,3,4,6,12\}$ as $\lambda\mid12$. Suppose first $\lambda$=2, then $b=312$, and $|G_B|=18.$ The MAGMA(\cite{MR1484478}) command ``Subgroups(G:OrderEqual:=18)" shows that there are five conjugacy classes of subgroups with index 18, denoted respectively by $K_1,K_2,\cdots,K_5$ as representatives. The MAGMA(\cite{MR1484478}) commands ``O:=Orbits(K)" for $K=K_i$($i=1,2,\cdots5$), and ``\#O[j]"($j=1,2,\cdots,10$) show that the 8 orbit lengths of $K_c$($c = 1,2,3,4$) are $18^8$, and the 10 orbit lengths of $K_5$ are $6^3$ and $18^7$, where $x^y$ means that the orbit-length $x$ appears $y$ times. Since $B$ is the union of $G_B$-orbits on $\mathcal{P}$, we only need to consider $K_5$. Using the GAP(\cite{GAP4}) Package ``design", the command ``BlockDesign(v,[B],G))" and ``ALLTDesignLambdas($\mathcal{D}$) = 2", we obtain that  there is no $2$-$(144,12,2)$ design. 

For the cases $\lambda= 3, 4, 6$ and 12, the same method as for $\lambda=2$ was applied. We obtain a unique $2$-$(144,12,3)$ design and 96 different $2$-$(144,12,12)$ designs up to isomorphism. The following are some examples.

\medskip
\noindent
(1) $\mathcal{D}_1=(\mathcal{P},B_1^G)$, where $B_1=\{ 3, 7, 29, 30, 67, 68, 84, 96, 100, 101, 107, 134 \}$, $\lambda=3$.

\smallskip
\noindent
(2) $\mathcal{D}_2=(\mathcal{P},B_2^G)$, where $B_2=\{1, 2, 6, 15, 30, 35, 47, 56, 81, 118, 122, 135 \}$, $\lambda=12$.
\end{proof}
\begin{remark}
	{\rm $\mathcal{D}_1$ is flag-transitive, and the 96 $2$-$(144,12,12)$ designs are not flag-transitive.}
\end{remark}
\begin{lemma}\rm\label{4.3}
	Let $\mathcal{D} = (\mathcal{P},\mathcal{B})$ be a non-trivial $2$-$(12^2,12,\lambda)$ design with $\lambda\mid 12$, admitting a block-transitive automorphism $G=PGL(3,3)$. Then $\lambda=6.$
\end{lemma}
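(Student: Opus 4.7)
The plan is to mirror the computer-algebra strategy of Lemma \ref{4.2}, now working inside $G=PGL(3,3)$, which has order $11232=2\cdot|PSL(3,3)|$ and acts faithfully on $\mathcal{P}=\{1,\dots,144\}$ as an overgroup of the representation constructed in the proof of Lemma \ref{4.2}. Since a block-transitive $2$-$(144,12,\lambda)$ design satisfies $b=\lambda k(k+1)=156\lambda$, the block stabiliser has order $|G_B|=|G|/b=72/\lambda$. For the admissible values $\lambda\in\{2,3,4,6,12\}$ (recall $\lambda\geq2$ from the discussion preceding Lemma \ref{4.1}), this gives candidate stabiliser orders $36,\,24,\,18,\,12,\,6$ respectively.

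For each such $\lambda$, I would enumerate the conjugacy classes of subgroups of the corresponding order using the \texttt{Subgroups} command in \textsf{MAGMA}, pick a representative $K$ from each class, and compute its orbit partition on $\mathcal{P}$ by \texttt{Orbits(K)}. Any candidate block $B$ must be a union of $K$-orbits of total length $k=12$, so only classes whose orbit multiset contains subsets summing to $12$ need be retained; in particular, orbit lengths exceeding $12$ eliminate most classes immediately. For each surviving pair $(K,B)$ one forms $\mathcal{D}=(\mathcal{P},B^G)$ with \texttt{BlockDesign} in the \textsf{GAP} \texttt{design} package and tests whether the replication numbers are constant on pairs, i.e.\ whether \texttt{AllTDesignLambdas}$(\mathcal{D})$ contains $2$ with the prescribed value of $\lambda$.

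I expect the calculation to reveal that the cases $\lambda\in\{2,3,4,12\}$ produce no $2$-design at all, while $\lambda=6$ yields at least one isomorphism class of $2$-$(144,12,6)$ designs admitting $PGL(3,3)$ block-transitively. The main obstacle is not conceptual but organisational: the number of subgroup classes grows as $|G_B|$ shrinks (the worst case being $\lambda=12$, where $|G_B|=6$ and many small subgroups must be screened), so one needs to be systematic about discarding classes whose orbit structure is incompatible with a $12$-element union, and about confirming non-existence for the failing values of $\lambda$ by exhausting all $G_B$-conjugacy classes and all valid orbit unions. Once the computation is done, a concrete base block realising $\lambda=6$ can be exhibited, just as $B_1$ and $B_2$ are exhibited in Lemma \ref{4.2}, completing the proof.
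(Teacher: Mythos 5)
Your proposal follows essentially the same route as the paper: both determine $|G_B|=72/\lambda$ from $b=156\lambda$ for each admissible $\lambda\in\{2,3,4,6,12\}$, enumerate subgroup classes of that order in $PGL(3,3)$ with \textsf{MAGMA}, search for $12$-element unions of orbits as candidate base blocks, and verify with the \textsf{GAP} \texttt{design} package that only $\lambda=6$ survives. The paper simply reports the outcome of this computation (a unique $2$-$(144,12,6)$ design up to isomorphism, with an explicit base block), so your plan matches its proof in method and conclusion.
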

\begin{proof}
	Suppose that $G=PGL(3,3)$, let $\mathcal{P}=\{1,2,\cdots,144\}$ and $\mathcal{P}(G)=\langle h_1,h_2\rangle$(See ATLAS\cite{MR0827219}) be the image of the permutation representations of $G$ on $\mathcal{P}$, where
\begin{align*}
h_1=&(1,2)(3,4)(5,7)(6,8)(10,13)(11,15)(12,17)(16,21)(18,24)(19,26)(20,27)
(22,30)(23,31)\\&(25,34)(28,38)(29,40)(32,43)(33,45)(35,48)(36,49)(39,53)
(41,55)(42,57)(44,60)(46,52)\\
&(47,62)(50,64)(51,66)(54,59)(56,61)(58,74)
(63,75)(65,77)(67,79)(68,76)(69,80)(71,83)\\
&(72,85)(73,86)(78,90)(81,94)
(84,97)(87,101)(88,103)(89,104)(91,106)(92,107)(93,109)\\
&(95,111)(96,112)
(98,115)(99,116)(100,118)(102,120)(108,124)(110,127)(113,130)\\
&(114,132)
(117,134)(119,137)(121,128)(122,123)(125,139)(126,136)(129,141)(131,142)\\
&(133,143)(135,144)
\end{align*}
and
\begin{align*}
	h_2=& (1,3,5,2)(4,6,9,12)(7,10,14,19)(8,11,16,22)(13,18,25,35)(15,20,28,39)
	(17,23,32,44)\\
	&(21,29,34,47)(24,33,46,48)(26,36,50,65)(27,37,51,67)(30,41,
	56,72)(31,42,58,60)\\
	&(38,52,68,74)(40,54,70,82)(43,59,64,62)(45,61,57,73)
	(49,63,76,88)(53,69,81,95)\\
	&(55,71,84,98)(66,78,91,94)(75,87,102,116)(77,
	89,86,100)(79,92,108,125)(80,93,110,128)\\
	&(83,96,113,131)(85,99,117,135)
	(90,105,118,136)(97,114,120,134)(101,119,138,143)\\
	&(103,121,132,141)(104,
	112,106,122)(107,123,115,133)(109,126,130,124)(111,129,137,139)\\
	&(127,140,
	142,144).
\end{align*}
	Here $\lambda\in\{2,3,4,6,12\}.$ Similar as Lemma \ref{4.2}, using MAGMA(\cite{MR1484478}) and GAP(\cite{GAP4}), there exists a unique $2$-$(144,12,6)$ design(up to isomorphism), which is listed below:
	
	\medskip
	\noindent
~~~~~~~	$\mathcal{D}_3=(\mathcal{P},B_3^G)$, where $B_3=\{  30, 31, 40, 44, 56, 67, 71, 84, 85, 93, 122, 125 \}, \lambda=6$.
\end{proof}
\begin{remark}\rm
	In fact, $\mathcal{D}_3$ is also flag-transitive here. The flag-transitive designs obtained in Lemmas \ref{4.2} and \ref{4.3} are consistent with the conclusions of \cite[Example 2.2]{MR4516389}.
\end{remark}

\medskip
\noindent
{\bf Proof of Theorem \ref{th13}} Base on the analysis of Lemmas \ref{4.1}-\ref{4.3}, the Theorem \ref{th13} holds. \qedsymbol

\medskip
\noindent{\bf Data availability} The datasets generated during and/or analyzed during the current study are available from
the corresponding author on reasonable request.

\section*{Declarations}

\noindent{\bf Conflict of interest} The authors declare that they have no known competing financial interests or personal
relationships that could have appeared to influence the work reported in this paper.

\newpage
\end{document}